\newtheorem{theorem}{Theorem} 
\newtheorem{proposition}[theorem]     {Proposition}
\newtheorem{corollary}[theorem] {Corollary}
\theoremstyle{definition}
\newtheorem{example}[theorem]   {Example}
 \def\mylabel#1{\label{#1}}
\newcommand{\RR}{{\bf R}}     
\newcommand{\R}[1]{{\bf R}^{#1}} 
\newcommand{\FF}{{\bf F}}     
\newcommand{\CC}{{\bf C}}     
\newcommand{\mcal}[1]{\mathcal {#1}} 
\newcommand{\msf}[1]{\mathsf {#1}}
\newcommand{\ZZ}{\ensuremath{{\bf Z}}}	   
\newcommand{\NN}{\ensuremath{{\bf N}}}	   
\newcommand{\Np}{\ensuremath{{\bf N}_{+}}}  
\renewcommand{\S}[1]{\ensuremath{{\bf S}^{#1}}} 
\renewcommand{\P}[1]{\ensuremath{{\bf P}^{#1}}} 
\newcommand{\D}[1]{\ensuremath{{\bf D}^{#1}}} 
\newcommand{\Fix}[1]{\ensuremath{\operatorname{\mathsf {Fix}}(#1)}}
\newcommand{\co}{\colon\thinspace} 
\newcommand{\cc}{\ensuremath{\mathfrak c}}
\newcommand{\ee}{\ensuremath{\mathfrak e}}
\newcommand{\ff}{\ensuremath{\mathfrak f}}
\renewcommand{\gg}{\ensuremath{\mathfrak g}}
\newcommand{\hh}{\ensuremath{\mathfrak h}}
\newcommand{\jj}{\ensuremath{\mathfrak j}}
\newcommand{\kk}{\ensuremath{\mathfrak k}}
\newcommand{\mm}{\ensuremath{\mathfrak m}}
\newcommand{\nn}{\ensuremath{\mathfrak n}}
\newcommand{\oo}{\ensuremath{\mathfrak o}}
\newcommand{\pp}{\ensuremath{\mathfrak p}}
\renewcommand{\ss}{\ensuremath{\mathfrak {s}}}
\newcommand{\ttt}{\ensuremath{\mathfrak t}}
\newcommand{\vv}{\ensuremath{\mathfrak v}}
\newcommand{\ww}{\ensuremath{\mathfrak w}}
\newcommand{\zz}{\ensuremath{\mathfrak z}}
\renewcommand{\ll}{\ensuremath{\mathfrak l}} 
\newcommand{\ad}{\ensuremath{{\mathsf {ad}}}}
\newcommand {\w}[1] {\ensuremath{\widetilde {#1}}}
\newcommand{\spec}[1]{\ensuremath{\operatorname{\msf{spec}}\,(#1)}}
\newcommand{\bo}[1]{\ensuremath{\boldsymbol {#1}}}
\newcommand{\lam}{\ensuremath{\lambda}}
\newcommand{\del}{\ensuremath{\delta}}
\newcommand{\Gam}{\ensuremath{\Gamma}}
\newcommand{\gam}{\ensuremath{\gamma}}
\begin{document}

\title[Actions of Lie  groups  and Lie algebras]
{Actions of Lie groups and Lie algebras on manifolds}

\author[M. W. Hirsch]{Morris W. Hirsch}
\address{University of California at Berkeley, 
 University of Wisconsin at Madison}
\curraddr{7926 Hill Point Road, Cross Plains, WI 53528}
\email{mwhirsch@chorus.net}
\thanks{I thank M. Belliart, K. DeKimpe, W. Goldman,  G. Mostow, J. Robbin,
  D. Stowe, F.-J. Turiel and J. Wolf  for invaluable help.}

\subjclass[2000]{Primary 57S20; Secondary 57S25, 22E25}
\date{December 5, 2008} 
\dedicatory{Dedicated to the memory of Raoul Bott}
\keywords{Transformation groups, Lie groups, Lie algebras}

\begin{abstract} 

Questions of the following sort are addressed:
 Does  a given Lie group  or Lie algebra act
  effectively on a given manifold?
 How smooth can such actions be?   
  What fixed point sets are possible?
What happens under perturbations? 
Old results are summarized, and new ones presented, including:
For every integer $n$ there are solvable (in some cases, nilpotent)
Lie algebras $\gg$ 
that have effective $C^\infty$ actions
on all $n$-manifolds, but on some (in many cases, all)
$n$-manifolds, $\gg$ does not have effective analytic actions.
   
\end{abstract}

\maketitle
  \tableofcontents
\section*{Introduction} 
Lie algebras were  introduced  by Sophus Lie
under the name ``infinitesimal group,'' meaning the  germ of a finite
dimensional, locally transitive Lie algebra of analytic vector fields
in $\R n$.
In his 1880 paper {\em Theorie der Transformationsgruppen}
  \cite{Lie80, Hermann75} and his later book with F. Engel
  \cite{LieEngel93}, Lie classified infinitesimal groups acting in dimensions
  $1$ and $2$ up to analytic coordinate changes.  This  work
  stimulated much research, but attention soon shifted to the
  classification and representation of abstract Lie algebras
  and Lie groups.  Later the topology of Lie groups was studied, with
  fundamental contributions by Bott.

In 1950  G.\,D. Mostow \cite{Mostow50} completed  Lie's
program of classifying effective transitive surface actions.\footnote
 {For each equivalence class of transitive surface actions, Mostow
 describes a representative Lie algebra by formulas for a basis of
 vector fields.  Determining whether one of these representatives is
 isomorphic to a given Lie algebra can be nontrivial.  Here the
 succinct summary of the classification in M. Belliart
 \cite{Belliart97} is useful.}
One of his major results is:
\begin{theorem}	[{\sc Mostow}]  \mylabel{th:mostow}
A surface $M$ without boundary admits a transitive Lie group
 action if and only if $M$ is a plane, sphere, cylinder,  torus,
projective plane, M\"obius strip or 
 Klein bottle.
\end{theorem}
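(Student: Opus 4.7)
The plan is to handle the two implications separately.

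For the \emph{if} direction I would exhibit on each listed surface an explicit transitive Lie group action: the self-translation action of $\RR^2$, the standard action of $SO(3)$ on $\S 2$, the action of $\RR\times SO(2)$ on the cylinder $\RR\times \S 1$, the self-translation action of $\S 1\times \S 1$ on the torus, and the quotient action of $SO(3)$ on $\P 2=SO(3)/O(2)$. For the M\"obius strip and Klein bottle I would display them as quotients of the cylinder and torus by orientation-reversing involutions, with transitive action descended from an extended symmetry group containing these involutions. Transitivity in each case is immediate by inspection.

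For the \emph{only if} direction, assume $M=G/H$ is a connected $2$-dimensional homogeneous space with $G$ connected and $H$ closed. I would first reduce to the case in which $G$ is \emph{minimal transitive}: among connected closed subgroups of $G$ still acting transitively on $M$, pick one of smallest dimension. A standard orbit/dimension argument applied to the Lie algebra of vector fields $\gg\to\mathrm{Vect}(M)$, whose image spans $T_pM$ at every point, forces $\dim G\le 3$ in this minimal case: otherwise a proper subalgebra of $\gg$ would already surject onto $T_pM$ everywhere, contradicting minimality.

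Next I would run a case analysis on $\dim G\in\{2,3\}$. If $\dim G=2$, then $H$ is discrete, $G$ is one of the two connected simply connected $2$-dimensional real Lie groups ($\RR^2$ or the affine group $\mathrm{Aff}^{+}(\RR)$), and enumerating their discrete normal subgroups yields $\RR^2$, the cylinder, and the torus (only $\RR^2$ in the nonabelian case, where every discrete normal subgroup is trivial). If $\dim G=3$, then $\dim H=1$; using the Bianchi classification of $3$-dimensional real Lie algebras, $\mathfrak{so}(3)$ produces $\S 2$ and $\P 2$, $\mathfrak{sl}(2,\RR)$ produces the hyperbolic plane (diffeomorphic to $\RR^2$), and the remaining solvable types ($\mathfrak{e}(2)$, $\mathfrak{e}(1,1)$, Heisenberg, and the Bianchi VI/VII families) produce nothing outside the list. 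The M\"obius strip and Klein bottle are picked up when $G$ is allowed to be disconnected, its identity component arising from the cylinder or torus actions and an orientation-reversing component realising them as $\ZZ/2$ quotients.

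The main obstacles are (i) the minimal-transitive dimension bound $\dim G\le 3$, and (ii) ruling out closed surfaces of genus $\ge 2$. For (ii), although the universal cover of such a surface is $\RR^2\cong\mathbb{H}^2$ and $PSL(2,\RR)$ acts transitively on $\mathbb{H}^2$, the fundamental group of a higher-genus surface is not normal in $PSL(2,\RR)$, so the transitive action does not descend; and no other minimal transitive Lie group can do better because discrete cocompact subgroups satisfying the required surface-group relations simply do not sit as normal subgroups in any of the Bianchi algebras of dimension $\le 3$. I expect the case-by-case verification for the Bianchi list, together with the careful bookkeeping of disconnected $G$ needed to produce the three nonorientable surfaces, to be the most laborious part of the argument.
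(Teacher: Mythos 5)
The paper offers no proof of this statement: it is quoted from Mostow's 1950 Annals paper \cite{Mostow50}, whose argument (completing Lie's local classification and globalizing it) is long and is not reproduced here. So your proposal can only be judged on its own terms, and it has genuine gaps. The most serious one is in the direction you call ``immediate by inspection.'' The M\"obius strip and the Klein bottle are \emph{not} obtained by descending the translation actions on the cylinder and the torus through an orientation-reversing involution $\sigma$: conjugation by $\sigma$ sends a translation $T_v$ to $T_{\sigma(v)}\ne T_v$, so $\sigma$ centralizes only a one-parameter subgroup of translations, and only that circle's worth of symmetry survives on the quotient --- nowhere near transitive. Realizing these two surfaces as homogeneous spaces is the genuinely delicate part of the ``if'' direction and requires different groups, e.g.\ $PSL(2,\RR)$ acting on unordered pairs of distinct points of $\P 1$ for the open M\"obius strip, and $\w{E}(2)=\R 2\rtimes\RR$ modulo a closed one-dimensional subgroup with component group the Klein bottle group for the Klein bottle. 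A related structural error: if $G$ acts transitively on a connected $M$ then so does its identity component (its orbits are open), so you cannot recover the nonorientable cases by ``allowing $G$ to be disconnected''; the disconnectedness necessarily lives in the isotropy subgroup $H$, which your case analysis never permits to have infinitely many components.

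Two further steps would fail as written. First, the reduction to $\dim G\le 3$: a proper subspace of $\gg$ that spans $T_pM$ at one point need be neither a subalgebra nor spanning at every other point, so minimality of $G$ does not force $\dim G\le 3$ by the argument you sketch; this bound is a theorem requiring proof (and in Lie's local classification there are transitive imprimitive algebras of arbitrarily large dimension, so one must argue that a \emph{globally} transitive group always contains a small closed transitive subgroup). Second, you repeatedly require $H$ to be \emph{normal} --- in the two-dimensional enumeration and in excluding genus $\ge 2$ --- but $G/H$ is a homogeneous space for any closed $H$, normal or not, so these exclusions prove nothing as stated. The clean way to rule out compact surfaces with $\chi<0$ is the result quoted as Theorem \ref{th:higher} in this paper ($\chi(G/H)\ge 0$ for compact $G/H$); the noncompact surfaces outside the list must then be excluded by actually completing the enumeration of closed (possibly non-normal, possibly disconnected) subgroups of the low-dimensional candidate groups, which your sketch does not carry out.
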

\noindent By a curious coincidence these are the only surfaces
without boundary admitting nontrivial compact Lie group actions (folk
theorem).

The following nontrivial extension of Theorem \ref{th:mostow} deserves
to be better known: 
\begin{theorem}		\mylabel{th:higher}
Let  $G$ be a Lie group and $H$ a closed subgroup such that the
manifold $M=G/H$
is compact.   Then $\chi(M)\ge 0$, and  if $\chi(M) >0$ then 
$M$ has finite fundamental group.
\end{theorem}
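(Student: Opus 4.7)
The plan has three stages: reduce to a compact group action via Montgomery's theorem, prove $\chi(M)\ge 0$ by a Lefschetz fixed-point argument using a topological generator of a maximal torus, and deduce finiteness of $\pi_1(M)$ from the equal-rank condition that $\chi(M)>0$ forces. The main obstacle is the Lefschetz step, where one must verify both that every fixed point of the torus translation is nondegenerate with local index $+1$, and that such fixed points exist exactly when $\mathrm{rank}\,L=\mathrm{rank}\,K$.

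For the reduction, let $K\subseteq G$ be a maximal compact subgroup. By Iwasawa's theorem $G$ is diffeomorphic to $K\times\R{n}$, so $G/K$ is contractible, and a classical theorem of Montgomery asserts that $K$ also acts transitively on the compact manifold $M$. After replacing $H$ by a conjugate, $M\cong K/L$ with $L:=K\cap H$ closed in the compact group $K$. Passing to the identity component of $K$ (which still acts transitively on each component of $M$), we may assume $K$ is compact connected, $L$ closed, and $M=K/L$ connected.

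Choose a maximal torus $T\subseteq K$ and a topological generator $t\in T$. Left translation by $t$ is a diffeomorphism $\tau\co M\to M$ isotopic to the identity through a one-parameter subgroup, so $\tau_*=1$ on $H^*(M;\QQ)$ and its Lefschetz number equals $\chi(M)$. A fixed point of $\tau$ is a coset $kL$ with $k^{-1}tk\in L$; since $t$ generates $T$ topologically, this amounts to $k^{-1}Tk\subseteq L$, which is possible iff $L$ contains some maximal torus of $K$, iff $\mathrm{rank}\,L=\mathrm{rank}\,K$. At each fixed point, $d\tau_{kL}$ acts on $T_{kL}M\cong\kk/\mathrm{Ad}(k^{-1})\ll$ by $\mathrm{Ad}(k^{-1}tk)$, whose eigenvalues are roots of unity different from $1$; grouping complex-conjugate pairs and any real $-1$'s yields $\det(I-d\tau_{kL})>0$, so the local index is $+1$. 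Hence $\chi(M)$ equals the number of fixed points, so $\chi(M)\ge 0$, and $\chi(M)>0$ forces $\mathrm{rank}\,L=\mathrm{rank}\,K$.

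Now suppose $\chi(M)>0$. Let $Z$ be the identity component of the center $Z(K)$, a torus. Every such central torus lies in every maximal torus of $K$; since $L_0$ contains one, $Z\subseteq L$. Then $M=K'/L'$ with $K':=K/Z$ compact semisimple and $L':=L/Z$ closed. By a theorem of Weyl, $\pi_1(K')$ is finite, and the long exact sequence of the fibration $L'\hookrightarrow K'\to M$ gives
$$\pi_1(K')\to\pi_1(M)\to\pi_0(L')\to\pi_0(K')=1,$$
in which both outer groups are finite ($\pi_0(L')$ because $L$ is compact). Therefore $\pi_1(M)$ is finite, completing the proof.
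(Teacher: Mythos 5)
The paper states this theorem without proof, only citing Gorbatsevich--Onishchik--Vinberg and Felix--Halperin--Thomas, so your argument can only be judged on its own merits --- and it has a genuine gap at the very first step. Montgomery's theorem does \emph{not} assert that a maximal compact subgroup of $G$ acts transitively on an arbitrary compact homogeneous space $G/H$; it asserts this under the additional hypothesis that $\pi_1(M)$ is finite (or $M$ simply connected). That hypothesis is not available to you here --- finiteness of $\pi_1(M)$ is part of what the theorem is asking you to prove. The reduction already fails for $G=\R n$, $H=\Z n$, $M$ the $n$-torus, where the maximal compact subgroup of $G$ is trivial, and more generally for any compact solvmanifold $G/\Gamma$ with $G$ simply connected solvable. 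In all of these cases $\chi(M)=0$, so the theorem is true, but your proof says nothing about them. Everything downstream of the reduction is essentially sound: the Hopf--Samelson Lefschetz count for translation by a topological generator $t$ of a maximal torus is correct (except that the eigenvalues of $\mathrm{Ad}(k^{-1}tk)$ on $\kk/\mathrm{Ad}(k^{-1})\ll$ are unit complex numbers $\ne 1$, not roots of unity, since $t$ has infinite order --- the determinant computation is unaffected), and the deduction of finite $\pi_1$ from the equal-rank condition via $Z(K)_\circ\subseteq L$ and Weyl's theorem is fine. But this only proves the theorem for homogeneous spaces of \emph{compact} groups.

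The missing ingredient --- and the reason the paper calls this a ``nontrivial extension'' of Mostow's theorem --- is the structural input needed when no compact subgroup acts transitively. One standard route is the natural fibration (Mostow, Gorbatsevich) of a compact homogeneous space $M=G/H$ over a homogeneous space of a compact group, with fiber a compact aspherical homogeneous space whose Euler characteristic vanishes unless it is finite; multiplicativity of $\chi$ in the fibration then gives $\chi(M)\ge 0$, and $\chi(M)>0$ forces the fiber to be finite, reducing to your compact-group argument. Another route is rational homotopy theory: compact homogeneous spaces are rationally elliptic, and ellipticity gives $\chi\ge 0$ together with the vanishing of odd rational homotopy when $\chi>0$ (this is the content of the cited Proposition 32.10 of Felix--Halperin--Thomas). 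Without some such step, your proof does not cover the general case.
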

\noindent 
This is due to  
 Gorbatsevich  {\em et al.\ }\cite[Corollary 1, p. 174]{Vinberg77}.
 See also 
 Felix {\em et al.\ }\cite[Prop. 32.10]{Halperin01},
Halperin \cite{Halperin77},
Mostow \cite{Mostow05}.

While much is known about the topology of compact group actions, there
has been comparatively little progress on classification of actions of
Lie algebras and noncompact groups, an exception being D. Stowe's
classification \cite{Stowe83} of analytic actions of $SL (2,\RR)$ on
compact surfaces.  The present article addresses the easier tasks of
deciding whether a group or algebra acts nontrivially on a given
manifold, determining the possible smoothness of such actions, and
investigating  their orbit structure.  Most proofs are
omitted or merely outlined, with details to appear elsewhere.

The low state of current  knowledge is illustrated by the lack of both
 counterexamples and  proofs for the following 

\smallskip 
{\bf Conjectures.} Let $\gg$ denote a real, finite dimensional Lie algebra.

\begin{description}

\item[(C1)] {\em If $\gg$ has effective actions on $M^n$, then $\gg$
  also has  smooth effective actions on $M^n$.}

\item[(C2)] 
{\em If $\gg$ is semisimple and has  effective smooth actions on
  $M^n$, $n \ge 2$,
then $\gg$ also has  effective analytic actions on $M^n$.} 

\end{description}
But however  plausible these statements may appear, they can't both be true:
\begin{itemize}
\item {\em {\em (C1)} or {\em (C2)} is false for $\gg=\ss\ll (2,\RR)$.}
\end{itemize}
For $\ss\ll (2,\RR)$ has effective actions on every $M^2$ (Theorem
\ref{th:poly}), but no 
effective analytic action if $M^2$ is compact with Euler
characteristic $\chi (M^2)<0$  (Corollary
\ref{th:smoothanal}(b)).  

It is unknown whether such a surface
can support a smooth effective action  $\beta$ of  $\ss\ll (2,\RR)$.
If it does, 
Theorem \ref{th:rho}(ii) implies that the vector fields $X^\beta$ are
infinitely flat at the fixed points of $\ss\oo (2,\RR)^\beta$.

The analog of (C2) for 
nilpotent  algebras is false.   If $\nn$ denotes  the Lie algebra of
$3\times 3$ niltriangular real matrices, by Theorem \ref{th:st3r}
and Example \ref{th:strn}: 
\begin{itemize}
\item{\em On every connected surface $\nn \times\nn$ has effective
$C^\infty$ actions, but no effective analytic actions.}
\end{itemize}

Further conjectures and  questions are given below. 

\subsection*{Terminology}  $\FF$ stands for the real field $\RR$, or
the complex field $\CC$.  The complex conjugate of $\lam :=a + \imath
b$ is $\bar \lam:=a-\imath b$. 
The sets of integers, positive integers and natural
numbers are $\ZZ$, $\Np=\{1,2,\dots\}$ and $\NN=0\cup\Np$ respectively.
$i, j, k, l, m,  n,r$ denote natural numbers, assumed positive unless
the contrary is indicated.  $\lfloor s \rfloor$ denotes the largest
integer $\le s$. 

$M$ or $M^n$ denotes an $n$-dimensional analytic manifold, perhaps with
boundary; its tangent space at $p$ is $T_pM$.  
$\vv^s (M)$ denotes the
vector space of $C^s$ vector fields on $M$, with the weak $C^s$
topology ($1\le s\le \infty$).  The Lie bracket makes $\vv^\infty$
a Lie algebra, with analytic vector fields forming a subalgebra.   
The value of $Y\in \vv^1 (M)$ at $p\in M$ is $Y_p$.  The derivative
of $Y$ at $p$ is a linear operator on   $dY_p$ on  $T_pM$.   

Except as otherwise indicated, manifolds, Lie groups and Lie algebras
are real and finite dimensional; manifolds and Lie groups are
connected; and maps between manifolds are $C^\infty$.  
 
$G$ denotes a Lie group with Lie algebra $\gg$ and universal covering
group $\w G$.  The $k$-fold direct product $G\times\dots\times G$ is
$G^k$ and similarly for $\gg$.  
$SL(m, \FF)$ is
the group of $m\times m$ matrices over $\FF$ of determinant $1$, and
$ST (m,\FF)$ is the subgroup of upper triangular matrices.  The
corresponding identity components and Lie algebras are denoted by
$SL_\circ (m,\FF)$, $\ss\ttt (m,\FF)$ and so forth.

An {\em action} $\alpha$ of $G$ on  $M$, 
indicated by $(\alpha, G,M)$,
is a homomorphism $g\mapsto g^\alpha$ from $G$ to the group of
homeomorphisms of $M$ with a continuous {\em evaluation map}
$\mathsf{ev}_\alpha \co G\times M\to M, \,(g,x)\mapsto g^\alpha(x)$.
We call
$\alpha$ {\em smooth},  or {\em analytic}, when
$\mathsf{ev}_\alpha$ has the corresponding property.\footnote
{Most of the results here can be adapted to  $C^r$ actions 
  and local actions}

Small gothic letters denote linear subspaces of
Lie algebras, with 
$\gg$ and  $\hh$ reserved for Lie algebras. 
Recursively define $\gg^{(0)}=\gg$ and $\gg^{(j+1)}={\gg^{(j)}}\,' =
 [\gg^{(j)},\gg^{(j)}]$ = commutator ideal of $\gg^{(j)}$.  Recall
 that $\gg$ (and also $G$) is  {\em solvable} of {\em derived length} $l=\ell
 (\gg)=\ell (G)$ if $l\in\Np$ is the smallest number satisfying
 $\gg^{(l)}=0$.  For example, $\ell (\ss\ttt (m,\FF)) = m$.

$\gg$ is  {\em nilpotent} if there exists $k\in\NN$ such that
$\gg_{(k)}=\{0\}$, where $\gg_{(0)}=\gg$ and
$\gg_{(j+1)}:=[\gg,\gg_{(j)}]$.   It is  known that  $\gg$ is solvable
if and only $\gg'$ is nilpotent. 

$\gg$ is {\em supersoluble} if the spectrum of $\ad\, X$ is real for
all $X\in\gg$, where $\ad:=\ad_\gg$ denotes the adjoint representation
 of $\gg$ on itself defined by $(\ad\, X)Y=[X,Y]$.
Equivalently: $\gg$ is solvable and faithfully represented by upper
triangular real matrices.

An {\em action} $\beta$ of $\gg$ on $M$, recorded as $(\beta, \gg,
M)$, is a continuous homomorphism $X\mapsto X^\beta$ from $\gg$ to
$\vv^\infty (M)$.  An {\em $n$-action} means an action on an
$n$-dimensional manifold.

A smooth  action  $(\alpha, G, M)$ determines a smooth action
$(\hat \alpha,\gg,M)$.
Conversely, if $G$ is simply connected and $(\beta, \gg, M)$ is such
that each  vector field $X^\beta$ is complete (as when  $M$
is compact), then there exists $(\alpha, G, M)$ such that $\beta=\hat
\alpha$.

The {\em orbit} of $p\in M$ under $(\alpha,G,M)$ is $\{g^\alpha (p)\co
g\in G\}$, and the orbit of $p$ under a Lie algebra action
$(\beta,\gg,M)$ is the union over $X\in \gg$ of the integral curves of
$p$ for $X^\beta$.  An action is {\em transitive} if
it has only one orbit.

The  {\em fixed point set} of $(\alpha, G, M)$ is
\[  \Fix \alpha =\{ x\in M\co g^\alpha (x)=x, \ (g\in G)\},\]
  and that of $(\beta,\gg,M)$ is
\[ 
 \Fix \beta :=\{p\in M\co X^\beta_p=0, \ (X\in \gg)\}
\]
The {\em support} of any action $\gamma$ on $M$ is the closure of
$M\,\verb=\=\,\Fix \gamma$.   

An action  is {\em effective} if its kernel is trivial, and {\em
nondegenerate} if the fixed point set of every nontrivial element has
empty interior.  Effective analytic actions are nondegenerate.
 A group action is {\em almost effective} if its
kernel is discrete.  

\section*{Construction of actions}  
Every $G$ acts effectively and analytically on itself
by translation.  Every  $\gg$ admits a faithful
finite dimensional representation $\msf R\co\gg\to \gg\ll (n, \RR)$ by
Ado's theorem (Jacobson \cite {Jacobson62}).  If
$\msf R (\gg)$ has trivial center, it induces  effective analytic
action by  $\gg$ on the projective space $\P {n-1}$ and the sphere
$\S{n-1}$.    

An action gives rise to actions on other manifolds by blowing up
invariant submanifolds in various ways; this preserves effectiveness
and analyticity.  Blowing up fixed points of standard actions of
$ST_\circ (3, \RR)$ on $\P 2, \S 2$ and $\D 2$ yields:

\begin{theorem}		\mylabel{th:st3r}
$ST_\circ  (3, \RR) $  has effective analytic actions on  all
 compact surfaces.\\
  \hspace*{\fill}{\small \em (F. Turiel \cite{Turiel03})}
\end{theorem}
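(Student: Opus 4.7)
The plan is to produce effective analytic actions on every compact surface by starting from the standard projective actions of $ST_\circ (3,\RR)$ on $\P 2$, $\S 2$, and $\D 2$, iterating real analytic blow-ups at fixed points, and finally lifting to orientation double covers to reach the orientable closed surfaces.

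First I set up the base actions. Embedding $ST_\circ (3,\RR)$ in $SL(3,\RR)$ acting projectively on $\P 2$, the point $[1:0:0]$ is a fixed point because every upper triangular matrix preserves the standard flag; this action is effective and analytic. The double cover $\S 2 \to \P 2$ and restriction to a hemisphere give corresponding effective analytic actions on $\S 2$ and $\D 2$, each with at least one interior fixed point. Given any effective analytic action of $ST_\circ (3,\RR)$ on a real analytic surface $N$ with an isolated fixed point $p$, the real analytic blow-up $\pi\co \widetilde N \to N$ at $p$ replaces $p$ by an exceptional circle and lifts the action to an effective analytic action on $\widetilde N$, acting on the exceptional divisor via the projectivization of the isotropy representation on $T_p N$. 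Topologically $\widetilde N \cong N \# \P 2$, so $\chi (\widetilde N) = \chi (N) - 1$ and $\widetilde N$ is non-orientable. Because $ST_\circ (3,\RR)$ is solvable, Lie--Kolchin supplies an invariant flag in $T_p N$, so the lifted action on the exceptional circle has a new fixed point, permitting indefinite iteration.

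Iterating from $\P 2$ then produces effective analytic actions on each closed non-orientable surface $N_k$ (matching $\chi (N_k) = 2 - k$), while iterating from $\D 2$ handles the compact surfaces with boundary. For the closed orientable surfaces, I use that $\Sigma_g$ is the orientation double cover of $N_{g+1}$; since $ST_\circ (3,\RR)$ is connected (indeed simply connected, being diffeomorphic to $\R 5$), every action on $N_{g+1}$ lifts uniquely to an effective analytic action on $\Sigma_g$, with the case $g=0$ already covered directly by the action on $\S 2$. The main obstacle is the persistence claim in the blow-up step: one must show that the iterated isotropy representations continue to produce fixed points on successive exceptional divisors and that effectiveness is preserved at each stage. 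This reduces to a repeated application of Lie--Kolchin to the representations arising up the tower of blow-ups, together with a check that the derived linear representations remain nontrivial, which is routine but is the step on which the entire construction rests.
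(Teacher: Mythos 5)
Your overall strategy matches the paper's one-line sketch exactly: start from the standard actions of $ST_\circ(3,\RR)$ on $\P 2$, $\S 2$ and $\D 2$ and blow up fixed points, the operation the paper notes preserves effectiveness and analyticity. Your treatment of the closed orientable surfaces of genus $g\ge 1$ via the orientation double cover $\Sigma_g\to N_{g+1}$ is an addition the paper does not mention (it leaves everything to blowing up ``in various ways''), and it is correct: the orientation cover is functorial, so the action lifts canonically, the lift is analytic and effective, and connectedness of the group forces $\det dg^\alpha_p>0$ at fixed points, so fixed points downstairs lift to fixed points upstairs --- which you will need anyway if you want to cut boundary circles out of the orientable surfaces by oriented blow-up. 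Effectiveness up the tower is free (the blown-up action agrees with the original one on a dense open set), so that part of your final ``check'' costs nothing.

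The genuine soft spot is the step you yourself identify as the crux, because your stated justification for it is wrong as a general principle. Lie--Kolchin (Lie's theorem) for a connected solvable group produces an invariant line only over $\CC$; over $\RR$ a connected solvable group can act irreducibly on $\R 2$ (rotations), in which case the projectivized isotropy action on the exceptional circle has no fixed point and the induction stalls. What actually saves the construction is not solvability but the explicit real-triangular form of the isotropy representations at the fixed points you use: at $[1:0:0]\in\P 2$ the derivative of $A=(a_{ij})\in ST_\circ(3,\RR)$ in the affine chart is the upper triangular matrix with diagonal $(a_{22}/a_{11},\,a_{33}/a_{11})$, whose eigenvalues are real and positive, and this triangularity propagates to the new fixed points on each successive exceptional circle because those points sit on the invariant flag. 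That computation, not Lie--Kolchin, is the real content of the proof and should be carried out. A smaller slip: the $ST_\circ(3,\RR)$-invariant hemisphere $\{x_3\ge 0\}\subset\S 2$ has its fixed points $\pm e_1$ on the boundary circle, not in the interior, so the $\D 2$ base case needs different bookkeeping (blow up boundary fixed points, or manufacture disks and other bounded surfaces by oriented blow-up of interior fixed points of the closed-surface actions).
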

{\bf Conjecture. } {\em $ST_\circ (3, \RR) $ has effective analytic
actions on all surfaces.}

\medskip
Analytic approximation theory is used to prove: 
\begin{theorem}           \mylabel{th:hr}
The vector group $\R m$ has effective analytic actions on $M^n$ if $m\ge
 1,\ n\ge 2$.
\hspace{\fill}{\small\em(M. Hirsch \& J. Robbin
  \cite{HirschRobbin03})} 
\end{theorem}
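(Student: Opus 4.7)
The goal is to produce $m$ pairwise commuting, complete, analytic vector fields $X_1,\ldots,X_m$ on $M^n$ whose induced action of $\R m$, given by
\[
(t_1,\ldots,t_m)\longmapsto\Phi^1_{t_1X_1+\cdots+t_mX_m},
\]
is effective. Because $\R m$ is simply connected and the $X_k$ are complete and commute, this formula really does define an analytic $\R m$-action, and effectiveness then has to be checked separately.

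The construction reduces to finding a single pair $(Y,\phi)$: a nonzero complete analytic vector field $Y$ on $M$ together with a non-constant analytic first integral $\phi\co M\to\RR$, that is, $Y(\phi)\equiv 0$. Set $X_k:=\phi^kY$ for $k=1,\ldots,m$. These are analytic, and they commute by the computation $[\phi^jY,\phi^kY]=(\phi^jY(\phi^k)-\phi^kY(\phi^j))Y=0$, using $Y(\phi^r)=0$. Each $X_k$ is complete because on every $Y$-orbit $\phi$ is constant, so $X_k$ acts as a constant multiple of $Y$ there. They are $\RR$-linearly independent: $\sum c_k\phi^kY\equiv 0$ forces the polynomial $\sum c_kt^k$ to vanish on the infinite set $\phi(M)\subset\RR$, and hence every $c_k=0$. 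For effectiveness one uses $\Phi^1_{\sum t_kX_k}(p)=\Phi^{f(p)}_Y(p)$ with $f:=\sum_{k=1}^m t_k\phi^k$; because $f$ has no constant term (indices start at $k=1$), a short case analysis on whether regular $Y$-orbits are non-periodic, or periodic with constant period, or periodic with non-constant period, forces $f\equiv 0$ and hence all $t_k=0$.

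The pair $(Y,\phi)$ is produced as follows. Embed $M$ as a closed analytic submanifold of some $\R N$ using Grauert's theorem; the restrictions of coordinates supply plenty of non-constant analytic functions. Pick one as $\phi$ and choose further analytic $\phi_2,\ldots,\phi_{n-1}$ with $d\phi\wedge d\phi_2\wedge\cdots\wedge d\phi_{n-1}\not\equiv 0$, which is possible because $n\ge 2$. In the orientable case, fix an analytic volume form $\Omega$ and define $Y$ by $i_Y\Omega=d\phi\wedge d\phi_2\wedge\cdots\wedge d\phi_{n-1}$; the identity $d\phi\wedge i_Y\Omega=0$ then immediately yields $Y(\phi)=0$, while $Y$ is nonzero on the open locus where the $d\phi_i$ are linearly independent. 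The nonorientable case is reduced to the orientable one by passing to the orientation double cover, or equivalently by working with an analytic density.

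The main obstacle is completeness of $Y$ when $M$ is noncompact, since the $Y$ just constructed need not be complete. Rescaling $Y$ by any positive analytic function $h$ preserves $Y(\phi)=0$, so it suffices to construct a positive analytic $h$ making $hY$ complete. One does this by first building a bounded positive smooth rescaler using a proper analytic exhaustion of $M$ (Grauert--Remmert) and then approximating it in the fine topology by an analytic function; this analytic approximation step is precisely where the theory alluded to in the statement does its real work.
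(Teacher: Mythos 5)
The paper itself gives no proof of this theorem --- it records only that ``analytic approximation theory is used'' and cites an unpublished manuscript --- so your argument cannot be checked against the authors' own route. Taken on its own terms, most of your reduction is sound: given a nonzero complete analytic $Y$ with a nonconstant analytic first integral $\phi$, the fields $X_k=\phi^kY$ are analytic, commuting, complete, and linearly independent (the zero set of $Y$ is a proper analytic subset of connected $M$, so $\phi$ takes infinitely many values where $Y\ne 0$), and Grauert embedding does supply the raw material. Two repairs in passing: the completeness rescaling needs no approximation step at all, since $h=(1+|Y|_g^2)^{-1}$ is already positive and analytic when $g$ is the complete analytic metric induced by a closed Grauert embedding; and the orientation double cover does not work as stated, because if $\phi,\phi_2,\dots$ are pulled back (hence invariant) and $\Omega$ is anti-invariant under the deck involution, the field defined by $i_Y\Omega=d\phi\wedge\cdots$ comes out \emph{anti}-invariant and does not descend. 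A fix that needs no orientation is $Y=|\nabla_g\phi|^2Z-Z(\phi)\,\nabla_g\phi$ for a suitable analytic $Z$.

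The genuine gap is effectiveness. Linear independence of the $X_k$ only makes the kernel of the action a \emph{discrete} subgroup of $\R m$; your ``short case analysis'' is meant to kill the lattice part, but as stated it does not. At a regular point $p$ the constraint is $f(p)=0$ if the $Y$-orbit of $p$ is nonperiodic, and $f(p)\in T(p)\ZZ$ if it is periodic with minimal period $T(p)$, where $f=\sum_k t_k\phi^k$. The remark that $f$ has no constant term disposes of the constant-period case, but not of the case where $T$ is a nonconstant function of $\phi$. Concretely, on $M=\R{2}\setminus\{0\}$ with $\phi=x^2+y^2$, the recipe $i_Y(dx\wedge dy)=d\phi$ gives $Y_0=2y\,\partial_x-2x\,\partial_y$ with all orbits periodic of period $\pi$; after the positive analytic rescaling $Y=(\pi/\phi)Y_0$ every regular orbit has period exactly $\phi(p)$, so $\Phi^1_{\phi Y}=\mathrm{id}$ and $(1,0,\dots,0)$ lies in the kernel. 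So effectiveness is genuinely sensitive to the choice of $(Y,\phi)$ and of the rescaling, and the proof must specify how to make that choice --- for instance, arrange that the nonperiodic regular points of $Y$ contain an open set (then the analytic function $f$ vanishes there, hence identically, forcing all $t_k=0$), or show the minimal-period function cannot agree with a polynomial in $\phi$ without constant term. Until that is supplied, your construction is only guaranteed to give an almost effective action.
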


On open manifolds it is comparatively easy to produce effective Lie
algebra actions:
\begin{theorem}		\mylabel{th:noncompact}
Assume there is an effective action $(\alpha,\gg, W^n)$.  Then a
noncompact $M^n$ admits an effective action $(\beta, \gg, M^n)$ in the
following cases:
\begin{description}

\item[(a)] $M^n$ is parallelizable

\item[(b)] $n=2$  and $W^2$ is nonorientable. 

\end{description}
Moreover $\beta$ can be chosen 
nondegenerate, analytic, transitive or
fixed-point free provided $\alpha$ has the same property. 

\end{theorem}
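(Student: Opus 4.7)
My plan is to transfer $\alpha$ from a suitably chosen open chart of $W^n$ to $M^n$, adapting the construction to each case and to each additional structural property that $\alpha$ may possess. For (a), I would first restrict $\alpha$ to an open $U\subset W^n$ diffeomorphic to $\R n$ on which the restriction remains effective. For analytic $\alpha$, any nonempty open $U$ works, as every nonzero field in $\alpha(\gg)$ is automatically nonvanishing on a dense subset of the connected manifold $W^n$. For smooth $\alpha$, effectiveness says the evaluations $\msf{ev}_p\co\gg\to T_pW^n$ have trivial joint kernel over $p\in W^n$; since $\gg$ is finite-dimensional, finitely many witnesses $p_1,\dots,p_N$ already suffice, and I would take $U$ to be a tubular neighborhood of an embedded arc through them. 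Next, transfer the restricted fields via a diffeomorphism $\psi\co\R n\to B^n$ onto the open unit ball, chosen so that the derivatives of $\psi$ decay at infinity fast enough that each pushed-forward field $\psi_*X^\alpha$ and all its derivatives extend smoothly by zero across $\partial B^n$. This yields a compactly supported effective smooth $\gg$-action on $\R n$. Finally, transplant via any chart embedding $\R n\hookrightarrow M^n$ and extend by zero to obtain the smooth $\beta$.

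For (b), I would pass to the orientation double covers $\w W^2\to W^2$ and $\w M^2\to M^2$. Since $M^2$ is noncompact and nonorientable (the orientable case falling under (a)), $\w M^2$ is noncompact orientable, hence parallelizable, so (a) would apply to it. The lifted action $\w\alpha$ on $\w W^2$ is effective and equivariant under the deck involution; executing the (a)-construction equivariantly produces an involution-invariant effective $\gg$-action on $\w M^2$ that descends to $M^2$.

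The principal obstacle is preserving the rigid structural properties, especially analyticity. The cutoff in the smooth construction is inherently non-analytic: a nonzero analytic vector field on a connected analytic manifold cannot vanish on a nonempty open set. For analytic $\alpha$ one must therefore use parallelizability of $M^n$ essentially---presumably via analytic approximation in the spirit of Theorem~\ref{th:hr}, or via a direct global realization of $\alpha(\gg)$ using a global analytic frame on $M^n$---so that no cutoff is required. The transitive and fixed-point-free versions similarly cannot come from a localized construction and demand a genuinely global procedure. Finally, threading deck-involution equivariance through these refined constructions in (b) requires additional care.
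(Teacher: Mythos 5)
The paper's proof is a single sentence: $\beta$ is the pullback of $\alpha$ through an immersion $M^n\to W^n$. Such an immersion exists by the Hirsch--Poenaru immersion theory for open manifolds: since $M^n$ is noncompact, an immersion into $W^n$ exists as soon as there is a vector bundle monomorphism $TM^n\to TW^n$, which is automatic when $TM^n$ is trivial (case (a)) and, for surfaces, obtainable when $W^2$ is nonorientable (case (b)). An immersion in equal dimensions is a local diffeomorphism, so each $X^\alpha$ pulls back to a genuine vector field on $M^n$, brackets are preserved, and analyticity, nondegeneracy, transitivity and the absence of fixed points are inherited with no further work --- the ``moreover'' clause comes for free, and no orientation double cover is needed in (b). Your construction is entirely different, and the step that carries all the weight does not work.

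Concretely, the claim that one can choose $\psi\co\R n\to B^n$ so that every $\psi_*X^\alpha$ extends smoothly by zero across the boundary fails for nonabelian $\gg$. Take $\gg=\ss\ll (2,\RR)$ acting on $\RR$ by $\partial_x,\ x\partial_x,\ x^2\partial_x$, and let $\psi$ be any diffeomorphism of $\RR$ onto $(-1,1)$ with $\psi'>0$. In the coordinate $y=\psi(x)$ the field $\psi_*(x^2\partial_x)$ is $\psi'(x)x^2\,\partial_y$, and its $y$-derivative is $x^2\psi''(x)/\psi'(x)+2x$; if this stays bounded as $x\to\infty$ then integration forces $\psi'(x)x^2$ to converge to a nonzero constant, so the field does not vanish at the boundary, while if the field does tend to zero its derivative is unbounded. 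Either way the extension by zero is not even $C^1$, whatever decay you impose on $\psi$. This is precisely the obstruction that the paper's algebraic contractibility machinery (Proposition \ref{th:AC} and Theorem \ref{th:ACcor}) is designed to circumvent, and it is why the smoothness of the disk-supported actions behind Theorem \ref{th:poly} is stated as unknown; if your compression worked, every algebra with a smooth effective $n$-action would act smoothly and effectively on \emph{every} $n$-manifold, which is far more than the theorem asserts. You correctly sense that the analytic, transitive and fixed-point-free cases cannot come from a cutoff at all, but the remedy you gesture at (analytic approximation, global frames) is left entirely open, whereas the immersion pullback handles all of these uniformly.
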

\begin{proof}
Define $\beta$ as the pullback of $\alpha$ through an immersion
$M^n\to W^n$ (for immersion theory see Hirsch \cite{Hirsch61}, Poenaru
\cite{Poenaru62}, Adachi \cite{Adachi84}).
\end{proof}

\begin{corollary}		\mylabel{th:noncompactcor}
Every noncompact $M^2$ supports effective analytic actions by $\ss\ll
(3,\RR)$ and $\ss\ll (2,\CC)$.  Every parallelizable noncompact $M^n$
has effective analytic actions by $\ss\ll (n+1, \RR)$, by $\ss\ll
(\frac{n}{2}, \CC)$ if $n$ is even, and by $\ss\ll
(\lfloor\frac{n}{2}\rfloor +1, \CC)$ if $n$ is odd.
\end{corollary}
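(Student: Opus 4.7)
The plan is to invoke Theorem~\ref{th:noncompact} with carefully chosen model effective analytic actions $(\alpha,\gg,W^n)$.

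For the second sentence I take three standard models. First, $\ss\ll(n+1,\RR)$ acts on $W^n=\P{n}(\RR)$ by the standard projective representation of $SL(n+1,\RR)$; since $\ss\ll(n+1,\RR)$ is simple and the action is nontrivial, its kernel is a proper ideal hence zero, so the action is effective, and it is analytic because it is algebraic. Second, for even $n$, $\ss\ll(n/2,\CC)$ acts on $W^n=\CC^{n/2}\cong\RR^n$ via its faithful defining $\CC$-linear representation, giving an effective analytic action. Third, for odd $n$, the defining representation of $\ss\ll(\lfloor n/2\rfloor+1,\CC)$ on $\CC^{(n+1)/2}$ commutes with scaling by positive reals and descends to $W^n=\S{n}=(\CC^{(n+1)/2}\setminus\{0\})/\RR_{>0}$; the kernel of the induced Lie algebra action would have to act by positive real scalars of determinant one, and being traceless it must vanish. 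Applying Theorem~\ref{th:noncompact}(a) to a codimension-zero immersion $M^n\to W^n$---which exists because a parallelizable $M^n$ immerses in $\RR^n$ by the Hirsch--Poenaru theorem and each $W^n$ above contains an open chart diffeomorphic to $\RR^n$---then produces the required action on every parallelizable noncompact $M^n$.

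For the first sentence, the $\ss\ll(3,\RR)$ claim is immediate: the model $W^2=\P{2}(\RR)$ above is nonorientable, so Theorem~\ref{th:noncompact}(b) delivers effective analytic actions on every noncompact $M^2$ at once. For $\ss\ll(2,\CC)$, the M\"obius action on $W^2=\S{2}=\P{1}(\CC)$ is effective and analytic, and case (a) handles every orientable (hence parallelizable) noncompact $M^2$.

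The main obstacle---which I expect to require the most ingenuity---is handling nonorientable noncompact $M^2$ under $\ss\ll(2,\CC)$: Theorem~\ref{th:noncompact}(b) demands a nonorientable $W^2$ supporting an effective analytic $\ss\ll(2,\CC)$-action, yet the standard candidates fail. Indeed $\ss\ll(2,\CC)\cong\ss\oo(3,1)$ is not a subalgebra of $\ss\ll(3,\RR)$ (the full algebra of analytic vector fields on $\P{2}(\RR)$), since their complexifications $\ss\oo(4,\CC)\cong\ss\ll(2,\CC)\oplus\ss\ll(2,\CC)$ and $\ss\ll(3,\CC)$ admit no embedding; and no fixed-point-free orientation-reversing involution of $\S{2}$ lies in the centralizer of the M\"obius algebra. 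A plausible route is to construct $W^2$ as a twisted quotient of an orientable $\ss\ll(2,\CC)$-surface by a free orientation-reversing involution that intertwines the action with the outer complex-conjugation automorphism $\sigma$ of $\ss\ll(2,\CC)$ through the identity $\overline{g\cdot z}=\bar g\cdot\bar z$, and then to verify that the descended vector fields span a faithful copy of all of $\ss\ll(2,\CC)$ rather than merely the $\sigma$-fixed subalgebra $\ss\ll(2,\RR)$. Exhibiting a smooth nonorientable model on which effectiveness survives the descent is the crux of the argument.
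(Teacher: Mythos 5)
Your handling of the second sentence, and of the orientable cases of the first, is exactly the paper's intended route: the corollary is offered as an immediate consequence of Theorem~\ref{th:noncompact}, using the standard models $\P{n}$ for $\ss\ll (n+1,\RR)$, the linear action on $\CC^{n/2}$ for even $n$, and the induced action on $\S{n}=(\CC^{(n+1)/2}\setminus 0)/\RR_{>0}$ for odd $n$; your effectiveness checks for these are correct, as is the use of $\P 2$ (nonorientable) to invoke case (b) for $\ss\ll (3,\RR)$.

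The case you isolate as the crux --- nonorientable noncompact $M^2$ for $\ss\ll (2,\CC)$ --- is a genuine gap, and the repair you sketch cannot close it. The vector fields on $\S 2$ that descend to the quotient by a free involution $\sigma$ are exactly the $\sigma$-invariant ones; inside the M\"obius algebra these form the fixed subalgebra of the automorphism that $\sigma$ induces, which for the antipodal map $z\mapsto -1/\bar z$ is the compact real form $\ss\uu (2)$ --- a proper, $3$-dimensional subalgebra. The ``twist by complex conjugation'' is already accounted for in that computation, so no choice of free orientation-reversing involution yields more than a real form downstairs. Worse, no nonorientable model $W^2$ can exist at all: $\ss\ll (2,\CC)$ has no real subalgebra of codimension one (such a subalgebra would give a transitive effective local action of a $6$-dimensional simple algebra on the line, contradicting Lie's classification), so an effective analytic surface action has no $1$-dimensional orbits; and a generic Cartan element $X$ has $2{\bo r}_{\msf{NR}}(\ad\,X)=4>2$, so the paper's own Corollary~\ref{th:smoothanal}(a) forces $\Fix{X^\alpha}=\varnothing$ and hence $\Fix\alpha=\varnothing$. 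Every orbit is therefore open, the action is transitive with Borel isotropy, and the resulting developing immersion into $\P 1(\CC)$ makes the surface orientable. Consequently the $\ss\ll (2,\CC)$ half of the first sentence cannot be derived from Theorem~\ref{th:noncompact} for nonorientable $M^2$, and appears to be true only for orientable (equivalently, parallelizable) noncompact surfaces; your proposal proves everything that the stated method can deliver, and you were right not to paper over the remaining case.
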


Actions of $G$ on the circle $\S 1$ lift to actions of $\w G$ on
$\RR$, and by compactification to actions on $[0, 1]$.  Such actions
can be concatenated to get effective actions of $\w {G_1}\times\dots
\times \w{G_m}$ on $[0,1]$.  Further topological constructions lead to
effective actions on closed $n$-disks, trivial on the boundary.
Embedding such disks disjointly into an $n$-manifold leads to:
\begin{theorem}           \mylabel{th:poly}
$\w {SL}_\circ (2, \RR)^j \times ST_\circ  (2,\RR)^k\times \R m$ acts
 effectively  on every manifold of
 positive dimension ($j, k, m \ge 0$).
\end{theorem}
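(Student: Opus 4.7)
The plan is to follow the outline indicated just before the statement: realize each factor as acting effectively on the interval $[0,1]$ fixing its endpoints, concatenate these into an effective action of the full product on $[0,1]$, and then transport the result to $M^n$ by a radial extension through an embedded $n$-disk.

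First, for each building block I produce an effective action on $[0,1]$ fixing $\{0,1\}$. The group $ST_\circ (2,\RR)$ is isomorphic to the affine group $\{x\mapsto a^{2}x+ab\colon a>0,\ b\in\RR\}$ acting faithfully on $\RR$; this action extends continuously to $[-\infty,+\infty]\cong [0,1]$ by fixing $\pm\infty$, and translation by $\RR$ extends in the same way. For $\w{SL}_\circ (2,\RR)$ I start from the projective action of $PSL_\circ (2,\RR)$ on $\P 1\cong\S 1$ and lift through the universal cover $\RR\to\S 1$; faithfulness of the lift uses that $\pi_1(PSL_\circ (2,\RR))\cong\ZZ$ acts as the nontrivial $2\pi$-deck translations. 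Equivariance with those deck translations forces $g\cdot(x+2\pi)=g\cdot x+2\pi$, so $|g\cdot x-x|$ is $2\pi$-periodic in $x$ and hence bounded; therefore $g\cdot x\to\pm\infty$ as $x\to\pm\infty$ and the action extends continuously to $[-\infty,+\infty]$ fixing the endpoints.

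Second, I partition $[0,1]$ into $N:=j+k+m$ consecutive closed subintervals $I_1,\dots,I_N$ and, after affine reparametrization, let the $i$th factor of $\w{SL}_\circ (2,\RR)^j\times ST_\circ (2,\RR)^k\times \R m$ act on $I_i$ via the action above (fixing $\partial I_i$) and act trivially on $[0,1]\setminus I_i$. Because every factor fixes every subdivision point, the individual actions commute pairwise and assemble into an action of the full product on $[0,1]$ that is effective, as each factor is already effective on its own subinterval. Third, the map $\phi\mapsto\bigl(r\omega\mapsto\phi(r)\omega\bigr)$ is an injective homomorphism from the group of endpoint-fixing self-homeomorphisms of $[0,1]$ into the group of self-homeomorphisms of $\D n=\{r\omega\colon 0\le r\le 1,\ \omega\in\S{n-1}\}$ that fix $\partial\D n$ pointwise. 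Applying it converts the concatenated action into an effective action on $\D n$ trivial on $\partial\D n$, which then extends by the identity through any embedding $\D n\hookrightarrow M^n$ as a coordinate disk, producing a continuous effective action of the full product on $M^n$.

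The only real subtlety I anticipate is verifying endpoint continuity for the $\w{SL}_\circ (2,\RR)$-action in Step 1 jointly in the group parameter; once the $2\pi$-periodic displacement bound is shown to depend continuously on $g$, the remaining steps are routine topological gluing.
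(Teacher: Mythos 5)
Your proposal is correct and follows essentially the same route the paper sketches just before the theorem: lift circle actions to $\RR$, compactify to $[0,1]$, concatenate the factors, pass radially to a disk action trivial on the boundary, and embed the disk in $M^n$. The details you supply (faithfulness of the lifted $\w{SL}_\circ(2,\RR)$-action, the periodic displacement bound for endpoint continuity, and the pasting argument on $M^n$) fill in exactly the steps the paper leaves implicit.
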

\noindent In many cases such actions cannot be analytic and
their smoothness is unknown; but
see Theorem \ref{th:ACcor}. 

\section*{Algebraically contractible groups} 
The actions constructed  above are either analytic or merely
continuous.  Next we exhibit  a large class of solvable groups
having effective actions--- often smooth---  on manifolds of
moderately low dimensions.   In many case these are smooth 
but cannot be analytic.

Let $\mcal E (G)$ denote the space of endomorphisms of $G$,
topologized as a subset of the continuous maps $G\to G$.  We call $G$
and $\gg$ {\em algebraically contractible} (AC) if there is a path 
$\phi=\{\phi_t\}$ in $\mcal E (G)$ joining the 
the identity
endomorphism $\phi_0$ of $G$ to  the trivial  endomorphism $\phi_1$.
Equivalently: $G$ is solvable and simply connected, and the identity
and trivial endomorphisms of $\gg$ are joined by a path
$\psi=\{\psi_t\}$ in the affine variety $\mcal E (\gg)$ of Lie algebra
endomorphisms of $\gg$.  Every path $\psi$ comes from a unique path
$\phi$.

The class of AC groups contains the vector group $\R n$,  the matrix
groups $ ST_\circ (n,\RR)$, \ $\w{ST_\circ }(n,\CC)$, and many of
their subgroups and quotient groups.  It is closed under direct
products.  If $\gg$ is AC and an ideal $\hh$ is mapped into itself
by every endomorphism of $\gg$, then $\hh$ and $\gg/\hh$ are AC.

However, some nilpotent Lie algebras are not AC (DeKimpe
\cite{DeKimpe06}): The derivation algebra of an AC Lie algebra cannot
be unipotent, but there are $8$-dimensional nilpotent Lie algebras
having unipotent derivation algebras (Dixmier \& Lister
\cite{DixmierLister57}, Ancochea \& Campoamor
\cite{AncocheaCampoamor01}).

\begin{proposition}		\mylabel{th:AC}
Assume $G$ is algebraically contractible and $(\alpha,G, M)$
is almost effective.  There is an effective action $(\beta, G, M\times
\RR)$ with the following properties:
\begin{description}
\item[(a)]     $g^\beta (x,0)= (g^\alpha (x), 0)$

\item[(b)]   $g^\beta (x,t) = (x,t)$ if $\vert t\vert \ge 1$.

\item[(c)]   If $\alpha$ is smooth so is $\beta$.
\end{description}
\end{proposition}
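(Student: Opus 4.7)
The plan is to damp $\alpha$ toward the trivial action using the contracting path of endomorphisms supplied by the AC hypothesis. First, I invoke the AC property to fix a path $\phi=\{\phi_t\}_{t\in[0,1]}$ in $\mcal E(G)$ joining the identity endomorphism $\phi_0$ to the trivial endomorphism $\phi_1\equiv e$. Using the equivalent description as a path in the affine variety $\mcal E(\gg)$ and replacing the given path by a semi-algebraic (or even polynomial) representative in the same path-component if necessary, I may assume $t\mapsto \phi_t$ is $C^\infty$ in $t$.

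Next, pick a $C^\infty$ cutoff $f\co\RR\to[0,1]$ with $f(0)=0$ and $f(t)=1$ for $|t|\ge 1$, and define
\[
g^\beta(x,t) := \bigl(\phi_{f(t)}(g)^\alpha(x),\,t\bigr).
\]
Because each $\phi_{f(t)}$ is a group homomorphism and $\alpha$ is an action, $g^\beta\circ h^\beta=(gh)^\beta$ holds pointwise, so $\beta$ is a $G$-action on $M\times\RR$. Property (a) follows from $\phi_0=\mathsf{id}$ and (b) from $\phi_1\equiv e$. For (c), the evaluation map of $\beta$ is the composition of the smooth map $(g,t)\mapsto \phi_{f(t)}(g)$ with the evaluation map of $\alpha$, hence is smooth whenever $\alpha$ is.

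It remains to verify effectiveness. Suppose $g\in\ker\beta$. Then $\phi_{f(t)}(g)^\alpha(x)=x$ for all $x\in M$ and $t\in\RR$, i.e.\ $\phi_{f(t)}(g)\in\ker\alpha$ for every $t$. The map $t\mapsto \phi_{f(t)}(g)$ is a continuous path in $G$ with image inside the discrete subgroup $\ker\alpha$ (discrete by almost effectiveness of $\alpha$); such a map on the connected domain $\RR$ must be constant. Evaluating at $t=0$ and $t=1$ yields $g=\phi_0(g)=\phi_1(g)=e$, as required.

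The main obstacle is the effectiveness step, where the connectedness provided by the endomorphism path is combined with the discreteness of $\ker\alpha$ to promote almost effectiveness of $\alpha$ to genuine effectiveness of $\beta$; the rest is a routine bump-function interpolation, modulo the small point of arranging the path $\phi_t$ to be smooth in $t$.
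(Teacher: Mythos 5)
Your proof is correct and takes essentially the same route as the paper's: both damp $\alpha$ through the contracting path of endomorphisms, the paper by extending $\phi$ over $\RR$ via $\phi_t=\phi_1$ for $t\ge 1$ and $\phi_t=\phi_{-t}$ for $t\le 0$, you by precomposing a fixed path on $[0,1]$ with a smooth cutoff $f$. Your explicit verification of effectiveness --- a continuous path in the discrete subgroup $\ker\alpha$ must be constant, forcing $g=\phi_0(g)=\phi_1(g)=e$ --- is the one step the paper leaves implicit, and you supply it correctly.
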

\begin{proof}   We can choose the path $\psi\co[0,1]\to\mcal E (\gg)$  in
  the definition of AC to be $C^\infty$ and constant in a neighborhood
  of $\{0,1\}$.  The corresponding path $\phi\co[0,1]\to \mcal E(G)$
  has the same properties.  Extend $\phi$ over $\RR$ by setting
  $\phi_t = \phi_1$ (= the trivial endomorphism) for $t \ge 1$, and
  $\phi_t= \phi_{-t}$ for $t \le 0$.   Now define $\beta$ by
\[
  g^\beta (x, t):= \phi_t (g)^\alpha(x),  \qquad (g\in G, \,(x,t)\in
  M\times\RR). 
\]
\end{proof}

\begin{theorem}		\mylabel{th:ACcor}
Assume $G_i$ is AC and $(\alpha_i, G, \S{n-1})$ is almost effective,
($i=1,\dots,k$).  For every $M^n$ there exists an effective action
$(\delta, G_1\times \dots\times G_k, M^n)$ that is smooth provided the
$\alpha_i$ are smooth.
\end{theorem}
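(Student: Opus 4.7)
My plan is to apply Proposition \ref{th:AC} once per factor to upgrade each $\alpha_i$ to a compactly supported effective action on a cylinder, and then to implant the resulting actions in $k$ disjoint coordinate charts of $M^n$. For each $i$, applying Proposition \ref{th:AC} to $(\alpha_i,G_i,\S{n-1})$ yields an effective action $\beta_i$ of $G_i$ on $\S{n-1}\times\RR$, smooth if $\alpha_i$ is, and by property (b) satisfying $g^{\beta_i}(x,t)=(x,t)$ whenever $|t|\ge 1$; thus the support of $\beta_i$ lies in the compact cylinder $C:=\S{n-1}\times[-1,1]$.

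Second, I transport each $\beta_i$ into $M^n$. The map $(\omega,t)\mapsto e^t\omega$ identifies $\S{n-1}\times\RR$ diffeomorphically with $\R n\setminus\{0\}$ and sends $C$ onto the compact annulus $A:=\{x\in\R n\co e^{-1}\le\|x\|\le e\}$. Choose $k$ pairwise disjoint smooth open embeddings $e_1,\dots,e_k\co\R n\hookrightarrow M^n$, which exist in any $n$-manifold, and set $K_i:=e_i(A)$. Push $\beta_i$ forward via $e_i$ to an action $\gamma_i$ of $G_i$ on the open set $e_i(\R n\setminus\{0\})$, then extend $\gamma_i$ by the identity to all of $M^n$. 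The extension inherits the regularity of $\beta_i$: any sequence in $e_i(\R n\setminus\{0\})$ that leaves $K_i$ has preimages leaving $A$, so on an open neighborhood of $M^n\setminus K_i$ the transported action already agrees with the identity extension.

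Finally, because the compact supports $K_i$ are pairwise disjoint, $g^{\gamma_i}$ and $h^{\gamma_j}$ commute as diffeomorphisms of $M^n$ whenever $i\ne j$; hence
\[
  (g_1,\dots,g_k)^\delta\;:=\;g_1^{\gamma_1}\circ\cdots\circ g_k^{\gamma_k}
\]
is a well-defined action $\delta$ of $G_1\times\cdots\times G_k$ on $M^n$, of the same regularity as the $\alpha_i$. For effectiveness, suppose $(g_1,\dots,g_k)^\delta$ is the identity of $M^n$: restricted to the open set $e_i(\R n\setminus\{0\})$ every $\gamma_j$ with $j\ne i$ acts trivially, so $g_i^{\gamma_i}$ is trivial there, and since $\beta_i$ is effective this forces $g_i$ to be the identity of $G_i$. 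No step looks like a genuine obstacle: the construction is essentially bookkeeping once Proposition \ref{th:AC} is in hand, with the one place property (b) is essential being the smoothness of each identity-extension of $\gamma_i$ to $M^n$.
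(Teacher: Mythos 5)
Your proposal is correct and follows essentially the same route as the paper: apply Proposition \ref{th:AC} to each factor, use property (b) to get compact support in the cylinder, transplant into disjoint coordinate disks of $M^n$ via an identification of the cylinder with a punctured disk (the paper) or with $\R n\setminus\{0\}$ (you), extend by the identity, and combine the commuting factors. The only differences are cosmetic, and your added detail on smoothness of the identity extension and on effectiveness is consistent with what the paper leaves implicit.
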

\begin{proof} Let $(\beta_i, G_i, \S{n-1}\times \RR)$
obtained from $\alpha_i$ as in Proposition \ref{th:AC}.  
Through an
identification   $\S{n-1}\times \RR=\D n\,\verb=\=\, (\S{n-1}\cup0)$,
extend $\beta_i$ to an action $(\gam_i,  G_i, \D n)$ with compact
support in $\D n\,\verb=\=\, \S{n-1}$.   (Here $\D n$ is the unit
$n$-disk with boundary $\S {n-1}$.)  Transfer the $\gamma_i$  to
actions $\del_i$ in $k$ 
disjoint coordinate disks $D^n_i \subset M^n$. Define
$\del$ to coincide with  $\del_i$ in $D^n_i$ and to be trivial outside
$\cup_iD^n_i$.
\end{proof}

\begin{corollary}		\mylabel{th:ACcorfaith}
Assume $G_i\subset GL (n,\RR)$ is algebraically contractible and
contains no scalar multiple of the identity matrix, ($i=1,\dots,k$).
Then $G_1\times\dots\times G_k$ has effective smooth actions on all
$n$-manifolds.
\end{corollary}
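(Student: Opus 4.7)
The plan is to reduce Corollary~\ref{th:ACcorfaith} to Theorem~\ref{th:ACcor}. It suffices to exhibit, for each $i$, an almost effective smooth action $\alpha_i$ of $G_i$ on the sphere $\S{n-1}$: once these are in hand, the AC hypothesis on each $G_i$ lets us invoke Theorem~\ref{th:ACcor} directly to conclude that $G_1\times\cdots\times G_k$ has effective smooth actions on every $M^n$.

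For the spherical action, use the inclusion $G_i\subset GL(n,\RR)$ to let $G_i$ act linearly on $\RR^n\setminus\{0\}$, and then normalize:
\[
g^{\alpha_i}(v):= \frac{g\,v}{\|g\,v\|}, \qquad v\in\S{n-1},\ g\in G_i,
\]
where $\|\cdot\|$ is the Euclidean norm. Since every element of $GL(n,\RR)$ is invertible, $gv\ne 0$ whenever $v\ne 0$, so $\alpha_i$ is well-defined and is real-analytic in $(g,v)$; a fortiori it is a smooth action.

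The remaining step is to verify that $\alpha_i$ is effective. Suppose $g\in G_i$ acts as the identity on $\S{n-1}$. Then for each unit vector $v$ one has $gv=\lambda(v)\,v$ with $\lambda(v):=\|gv\|>0$, so every unit vector is an eigenvector of $g$ with positive eigenvalue. A quick two-vector argument---apply the identity to $v_1$, $v_2$ and to $w:=(v_1+v_2)/\|v_1+v_2\|$, and use linear independence---shows that $\lambda(v)$ is constant, so $g=\lambda I$ for some $\lambda>0$. The hypothesis that $G_i$ contains no scalar multiple of $I$ (apart from $I$ itself) then forces $g=I$, so $\ker\alpha_i$ is trivial. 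Applying Theorem~\ref{th:ACcor} with these $\alpha_i$ finishes the proof. I do not expect any serious obstacle here; the only mildly careful point is the eigenvalue-coincidence argument just sketched, and everything else is bookkeeping on top of Theorem~\ref{th:ACcor}.
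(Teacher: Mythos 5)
Your proposal is correct and follows essentially the same route as the paper: the paper's proof simply observes that the natural actions of $G_i$ on $\P{n-1}$ and $\S{n-1}$ are smooth and effective and then applies Theorem~\ref{th:ACcor} (together with Theorem~\ref{th:noncompact}, though Theorem~\ref{th:ACcor} already covers every $M^n$). You have merely spelled out the effectiveness of the normalized linear action on $\S{n-1}$, which the paper leaves implicit, and your eigenvalue argument there is sound.
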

\begin{proof} The natural actions of $G_i$ on  $\P{n-1}$ and 
  $\S{n-1}$ are smooth and effective.  Apply Theorems \ref{th:ACcor} and 
  \ref{th:noncompact}.
\end{proof}

\section* {The Epstein-Thurston theorem}
D.B.A. Epstein and W.P. Thurston \cite[Theorem 1.1] {ET79} discovered
fundamental lower bounds on the dimensions in which solvable Lie
algebras can act effectively:
\begin{theorem}[{\sc Epstein-Thurston}]   \mylabel{th:ET}
Assume $\gg$ is solvable and has an effective $n$-action.   Then
 $n\ge \ell (\gg) -1$, and  $n\ge \ell (\gg)$ if $\gg$ is nilpotent.
\end{theorem}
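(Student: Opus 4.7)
The plan is to reduce the solvable statement to the nilpotent one and then prove the nilpotent case by producing a strictly decreasing flag of orbit tangent spaces along the derived series.

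For solvable $\gg$ with $\ell(\gg) = l \ge 2$, the derived ideal $\gg^{(1)}$ is nilpotent (by the fact recalled in the Terminology section) with derived length $l-1$, and effectiveness of $\gg$ on $M^n$ descends to it: any $X \in \gg^{(1)}$ with $X^\alpha = 0$ would be zero by effectiveness of $\gg$. Granting the nilpotent case for $\gg^{(1)}$ then yields $n \ge l - 1$, as required.

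Assume now that $\gg$ is nilpotent with $\ell(\gg) = l$, and for $p \in M$ and $0 \le i \le l$ set
\[
V_i(p) := \{X^\alpha_p : X \in \gg^{(i)}\} \subset T_pM,
\]
so that $V_0(p) \supset \cdots \supset V_l(p) = 0$. Since $\gg^{(l-1)} \ne 0$ acts nontrivially by effectiveness, pick $p$ with $V_{l-1}(p) \ne 0$; then a fortiori $V_i(p) \ne 0$ for $i \le l - 1$, so the $\gg^{(i)}$-orbit through $p$ has positive dimension and the isotropy $\gg^{(i)}_p := \{X \in \gg^{(i)} : X^\alpha_p = 0\}$ is a proper subalgebra of $\gg^{(i)}$. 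By transitivity of $\gg^{(i)}$ on its orbit, the strict drop $V_{i+1}(p) \subsetneq V_i(p)$ is equivalent to the purely algebraic statement $\gg^{(i)}_p + \gg^{(i+1)} \subsetneq \gg^{(i)}$, which follows from the key lemma: \emph{in a nilpotent Lie algebra $\nn$, every proper subalgebra $\hh$ satisfies $\hh + [\nn, \nn] \subsetneq \nn$.} I would prove the lemma by induction on the nilpotency class $c$ of $\nn$: the base $c = 1$ is immediate; for $c \ge 2$, supposing for contradiction that $\hh + [\nn, \nn] = \nn$ and passing to $\bar{\nn} := \nn / \nn_{(c-1)}$ (of class $c - 1$), the inductive hypothesis forces $\bar{\hh} = \bar{\nn}$, i.e., $\nn = \hh + \nn_{(c-1)}$; then $\nn_{(c-2)} \subset \hh + \nn_{(c-1)}$ together with centrality of $\nn_{(c-1)}$ yields $\nn_{(c-1)} = [\nn, \nn_{(c-2)}] = [\hh, \nn_{(c-2)}] \subset \hh$, forcing $\nn = \hh$, a contradiction. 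Applying the lemma with $\nn = \gg^{(i)}$ and $\hh = \gg^{(i)}_p$ produces a strict flag of length $l$ in $T_pM$, so $n \ge l$.

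The main obstacle is the algebraic lemma; its proof crucially uses centrality of the last nonzero term of the lower central series, and this is precisely the structural feature that distinguishes the nilpotent bound $n \ge l$ from the weaker solvable bound $n \ge l - 1$.
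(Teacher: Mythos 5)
The paper does not prove this theorem: it is quoted from Epstein and Thurston \cite{ET79}, so there is no internal argument to compare yours against, and your proposal has to be judged on its own. On its own terms it is correct. The reduction of the solvable case to the nilpotent one via $\gg'$ (nilpotent, of derived length $\ell(\gg)-1$, still acting effectively) is sound. In the nilpotent case the chain of evaluation images $V_i(p)$ works as you claim: $\gg^{(i)}_p$ is a subalgebra because the bracket of two vector fields vanishing at $p$ again vanishes at $p$; it is proper because $V_i(p)\supseteq V_{l-1}(p)\ne 0$ for your chosen $p$; and the identification $V_i(p)/V_{i+1}(p)\cong\gg^{(i)}/\bigl(\gg^{(i)}_p+\gg^{(i+1)}\bigr)$ is elementary linear algebra applied to the evaluation map at $p$ --- the appeal to ``transitivity of $\gg^{(i)}$ on its orbit'' is unnecessary (and orbits need never be mentioned). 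Your key lemma is the standard non-generator property of $[\nn,\nn]$ in a nilpotent algebra (equivalently, $[\nn,\nn]$ lies in the Frattini subalgebra), and your induction on the nilpotency class proves it correctly; it is also correctly confined to the terms $\gg^{(i)}$ that are genuinely nilpotent, which is exactly where the gap of one between the solvable and nilpotent bounds arises, as the affine algebra $\ss\ttt(2,\RR)$ acting on the line confirms. One remark: what the paper later invokes under the name Epstein--Thurston in proving Theorem \ref{th:ETcor} is sharper than the stated inequalities, namely that $\gg^{(\ell(\gg)-1)}$ acts trivially on the union of the orbits of dimension $<n$. Your argument does not state this, but it delivers it with no extra work: in the critical dimension, whenever some element of $\gg^{(l-1)}$ is nonzero at $p$, your flag forces $\dim V_0(p)=n$ (respectively $\dim V_1(p)=n$ in the solvable case), so the orbit through $p$ is open; contrapositively $\gg^{(l-1)}$ vanishes at every point of every lower-dimensional orbit. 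It would be worth recording that corollary explicitly, since it is the form of the theorem the paper actually uses.
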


 In the critical  dimensions there is further information
on orbit structure:

\begin{theorem}		\mylabel{th:ETcor}
Let $\alpha$ be an effective $n$-action of a solvable Lie algebra
$\gg$.  Assume $n=\ell (\gg)-1$, or $\gg$ is nilpotent and $n=\ell
  (\gg)$.
\begin{description} 
\item[(i)] 
There is an open orbit.  If
$\alpha$ is nondegenerate the union of the open orbits is dense.

\item[(ii)]   
Assume $\gg^{(n-1)} \subset \cc =$ the center  of $\gg$.  Then: 

\begin{description}

\item[(a)] each nontrivial orbit of 
  $\gg^{(n-1)}$ lies in an open orbit of $\gg$ and has dimension
  $1$,

\item[(b)]  the number of open orbits is
   $\ge \dim  \gg^{(n-1)}$ 

\item[(c)]  if $\alpha$ is nondegenerate then
 $\dim \cc=1$.
\end{description}
\end{description}

\end{theorem}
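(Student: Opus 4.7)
The plan is to refine the Epstein-Thurston rank-flag argument so that in the critical dimension the generic inequalities become equalities, and then exploit centrality to pin down the orbit structure of $\gg^{(n-1)}$ and of $\cc$.

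For (i), set $V_j(p) := \{X^\alpha_p : X \in \gg^{(j)}\} \subset T_pM$ with generic rank $r_j$. The Epstein-Thurston argument produces strict rank drops $r_{j-1} > r_j$ at every stage where $\gg^{(j)}$ acts nontrivially, forcing $r_0 \ge \ell(\gg) - 1$ in general and $\ell(\gg)$ in the nilpotent case. The critical-dimension hypothesis turns these bounds into $r_0 = n$, so a generic orbit is open. For density under nondegeneracy, observe that on any open $W \subset M$ the restricted action $\alpha|_W$ is again effective: if $X^\alpha|_W = 0$ the zero set of $X^\alpha$ contains $W$ with nonempty interior, contradicting nondegeneracy. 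Applying Theorem \ref{th:ET} to $\alpha|_W$ then produces an open $\gg$-orbit meeting $W$.

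For (ii), a preliminary reduction eliminates the solvable non-nilpotent critical case $n = \ell(\gg) - 1$: the hypothesis $\gg^{(n-1)} \subset \cc$ makes $\gg^{(\ell(\gg)-2)}$ abelian, forcing $\gg^{(\ell(\gg)-1)} = 0$ and a contradiction. So (ii) really concerns the nilpotent critical case $n = \ell(\gg)$. For (a), centrality of $\gg^{(n-1)}$ makes each $Z^\alpha$ (for $Z \in \gg^{(n-1)}$) commute with $\gg^\alpha$, hence $\gg$-invariant and tangent to every $\gg$-orbit. At any $p$ with $Z^\alpha_p \ne 0$ we have $V_{n-1}(p) \ne 0$, and the strict-drop chain $n \ge r_0(p) > \cdots > r_{n-1}(p) \ge 1$ must realize all $n$ values in $[1,n]$; hence $r_j(p) = n - j$ throughout, so $V_0(p) = T_pM$ (open orbit) and $\dim V_{n-1}(p) = 1$. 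For (b), list the open $\gg$-orbits $\{U_\lambda\}$ and set $K_\lambda := \{Z \in \gg^{(n-1)} : Z^\alpha|_{U_\lambda} = 0\}$; by (a) each quotient $\gg^{(n-1)}/K_\lambda$ has dimension $\le 1$, while effectiveness gives $\bigcap_\lambda K_\lambda = 0$ (a $Z$ in this intersection has $Z^\alpha$ vanishing on every open orbit, and part (a) says every point where $Z^\alpha\ne 0$ lies in an open orbit, so $Z^\alpha = 0$ and $Z = 0$). The resulting injection $\gg^{(n-1)} \hookrightarrow \bigoplus_\lambda \gg^{(n-1)}/K_\lambda$ then forces the number of nontrivial summands to be at least $\dim \gg^{(n-1)}$.

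Part (c) is the main obstacle. The goal is to upgrade (a) by showing that on each open $\gg$-orbit the space of $\gg$-invariant vector fields is one-dimensional at every point. Granting this, any two linearly independent $Y_1, Y_2 \in \cc$ yield $\gg$-invariant vector fields that are pointwise proportional on each open orbit, so some nontrivial linear combination $aY_1 + bY_2 \in \cc$ has $\alpha$-image vanishing on an open set of $M$ --- contradicting nondegeneracy and forcing $\dim \cc = 1$. The delicate step is extending the rank-flag rigidity from the derived series to the upper central series (whose top term is $\cc$); I would attempt this by induction along the ascending central chain, using the equality $r_j(p) = n - j$ from (a) as the base case.
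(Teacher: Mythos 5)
Your parts (i) and (ii)(b) are essentially sound and close to the paper's argument, and your preliminary observation that the hypothesis $\gg^{(n-1)}\subset\cc$ rules out the non-nilpotent critical case is correct (the paper does not bother to make it). But there are two genuine gaps. First, in (ii)(a) the dimension-one claim rests on the assertion that the \emph{pointwise} ranks satisfy a strictly decreasing chain $r_0(p)>r_1(p)>\dots>r_{n-1}(p)$ at every point $p$ where $\gg^{(n-1)}$ acts nontrivially. That is not the statement of Theorem \ref{th:ET}; it is a strengthened pointwise version of it that you assert without proof, and nothing in your argument rules out $V_j(p)=V_{j+1}(p)$ at some intermediate stage. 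The paper gets $\dim V_{n-1}(p)=1$ by a different device: pick a one-dimensional $\zz\subset\gg^{(n-1)}\subset\cc$ with $Z^\alpha_p\ne 0$, pass to the local quotient of the open orbit by the flow of $\zz$ (legitimate because $\zz$ is central), obtain an induced action of $\gg$ on an $(n-1)$-manifold, and apply Epstein--Thurston there to conclude that $\gg^{(n-1)}$ acts trivially on the quotient; hence every $\gg^{(n-1)}$-orbit is contained in a $\zz$-orbit and is one-dimensional.

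Second, and more seriously, you do not prove (ii)(c): you identify it as ``the main obstacle,'' reduce it to the claim that central vector fields span at most a line at each point of an open orbit, and then only say you ``would attempt'' an induction along the upper central series. That induction is not carried out and is not obviously available, so as written the proposal establishes only (i), (a) modulo the gap above, and (b). The paper's proof of (c) is in fact short once the quotient trick is in hand: if $\dim\cc\ge 2$, choose a line $\zz\subset\cc$ so that the central ideal $\jj:=\gg^{(n-1)}+\zz$ has dimension $\ge 2$; the same quotient-by-$\zz$ argument shows every $\jj$-orbit is one-dimensional, so at each point of the open orbit $O$ the kernel $\kk_p\subset\jj$ is nontrivial; centrality makes each $Y^\alpha$ ($Y\in\jj$) a $\gg$-invariant vector field, so by transitivity on $O$ all the $\kk_p$ coincide, giving a nontrivial subspace acting trivially on the open set $O$ and contradicting nondegeneracy. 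Your ``granting this'' step (pointwise proportionality of two central fields forces a fixed linear combination to vanish on $O$) is fine --- invariance under the transitive action makes the proportionality factor constant --- but the input you need is exactly the one-dimensionality of $\jj$-orbits, which you should obtain from the quotient construction rather than from an unproved rank-flag rigidity for the upper central series.
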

\begin{proof} 
The union of  orbits of dimensions $<n$ is a closed
set $L$ in which  $\gg^{(\ell (\gg) -1)}$ acts trivially by
Epstein-Thurston.   Therefore $M^n\verb=\=L$, the union of the  open
orbits, is  nonempty because $\alpha$  is  effective, and dense
if $\alpha$ is nondegenerate.  This proves (i).  Next we prove (ii).

\

{\em (a)} 
Let $L$ be a nontrivial orbit of $\gg^{(n-1)}$ and let $O$ be the
orbit of $\gg$ containing $L$.  Then $O$ is an open set because $\dim
(O) =n$ by Epstein-Thurston.  This proves the first assertion of (a). 
To prove the second we can assume the action is transitive. 
Fix a $1$-dimensional subspace
$\zz\subset \cc$ having a   
$1$-dimensional orbit $L_1\subset L$.  
After replacing $O$ by a suitably small open subset, we can assume the
domain of the action is $O=\R {n-1}\times\RR$ with the slices $x\times
\RR$ being the orbits of $\zz$.  The induced action of $\gg$ on the
$n$-dimensional space of $\zz$-orbits kills $\gg^{(n-1)}$ by
Epstein-Thurston.
This implies $L_1=L$, which implies (a).  

\

{\em (b)} 
Suppose 
$\dim \gg^{(n-1)} = s \ge 1 $ and there are exactly $r$ open orbits
$O_i$, $i=1,\dots r$.  
As $\gg$ acts  transitively in  $O_i$  and $\gg^{(n-1)}$ is
central, there is a  codimension-one subalgebra $\kk_i\subseteq
\gg^{(n-1)}$  acting trivially in $O_i$.  
If $1\le r <s$ then $\cap_i\kk_i$ has positive dimension and  acts
trivially in each open 
orbit, and also in  all other orbits by
Epstein-Thurston.  This implies (b).

\

{\em (c)} Assume $\alpha$ is nondegenerate. By (a) there is an open
orbit $O$, which we can assume is the only orbit.  Let $O$, $L$, $\zz$
be as in the proof of (a).  If (c) is false we choose $\zz$ so that
the central ideal $\jj :=\gg^{(n-1)} + \zz $ has dimension $\ge 2$.
In the proof of (a) we saw that every nontrivial orbit of $\jj$ is
$1$-dimensional, hence every  orbit of $\jj$ is $1$-dimensional
because $\alpha$ is transitive and $\jj$ is central.
Therefore for every $p\in O$ there is a maximal nontrivial linear
subspace $\kk_p \subset \jj$ annihilated by $\alpha$.  As $\alpha$ is
transitive and $\jj$ is central, all the $\kk_p$ coincides with an ideal
that acts trivially in $O$.  This contradicts the
assumption that $\alpha$ is nondegenerate.
\end{proof}

\begin{example}		\mylabel{th:strn}
The nilpotent algebra $\nn=\ss\ttt (n+1,\RR)'\times \RR$ has derived
length $n$ and $2$-dimensional center $\ss\ttt (n+1,\RR)^{(n-1)}\times\RR$.  Being
algebraically contractible, $\nn$  acts effectively on all $n$-manifolds
by Corollary \ref{th:ACcorfaith}.  On the other hand, 
Theorem \ref{th:ETcor} implies:
\begin{itemize}
\item {\em   Every $n$-action of $\nn$ is   degenerate and hence nonanalytic.}
\end{itemize}
\end{example}
\section*{Weight spaces and spectral rank}
Let $T\co\gg\to\gg$ be  linear.  For
$\lam$ in the spectrum $\spec T\subset \CC$  define 
the  (generalized) {\em weight space} $\ww(T,\lam)\subset\gg$ to
be  the largest 
$T$-invariant subspace  in which $T$ has spectrum $\{\lam,
\bar\lam\}$.  The largest subspace of $\ww(T,\lam)$ in which $T$ acts
semisimply is

\[
 \mm(T,\lam):= \begin{cases}
 &\text{kernel of $T-\lam I$ \hspace{1in}  \quad if $\lam\in \RR$}\\
 &\mbox{kernel of $T^2- 2(\Re\ee\,\lam) T + \vert\lam\vert^2 I$  \qquad if
                                 $\lam \notin \RR$}
	\end{cases}
\]

For any set $S\subset \CC$ let $\Gam (S)$ denote the additive free abelian
subgroup of $\CC$ generated by $S$.  The rank of 
 $\Gam(\spec T)$ is the
{\em spectral rank}
$\bo r (T)$.
The  rank of
$\Gam(\spec T\,\verb=\=\,\RR)$ is the {\em nonreal spectral rank} $\bo
r_{\msf {NR}} (T)$.   
For a Lie algebra $\gg$ define
\[
  {\bo r} (\gg)=\max_{X\in\gg}{\bo r} (\ad\,X), \quad
  {\bo r}_{\msf{NR}} (\gg)=\max_{X\in\gg}{\bo r}_{\msf{NR}} (\ad\,X)
\]
For example, if  $X \in
\ss\ttt (m, \RR)$ is a sufficiently irrational diagonal matrix then
\[\begin{split}
{\bo r}(\ss\ttt (m, \RR))   &= {\bo r} (\ad\, X)  \ \ \ \ =  m-1,\\
  {\bo r}(\ss\ttt (m, \CC)) &= {\bo r}(\ad\,\, \imath  X) \ \ \,=  m-1,\\
{\bo r}_{\msf{NR}} (\ss\ttt (m, \CC)) &=
{\bo r}_{\msf{NR}}(\ad\,\, \imath X) =  m-1.
  \end{split}
\]
If $\ss$ is semisimple of rank $r$ with  
a Cartan decomposition  $\kk+\pp$,  almost every 
$X$ in the Cartan subalgebra $\kk$ satisfies
 \[{\bo r} (\ss)={\bo r} (\ad_\ss \,X)=r,\qquad  {\bo r}_{\msf{NR}}(\ss)={\bo
   r}_{\msf{NR}} (\ad_\ss \,X)=r 
\] 
(see  Helgason \cite[Prop.~III.7.4] {Helgason62}).

$Y\in \vv^\infty (M)$ is {\em flat at $p\in M$} when its Taylor series
vanishes in local coordinates centered at $p$.    If such a $Y$ is
analytic it is trivial.  
Given
$(\alpha, \gg, M)$ and $p\in M$,  define $\ff_p (\alpha) \subset \gg$
as the set of $Y\in\gg$ such that $Y^\alpha$ is flat at $p$.   This is
an ideal. 
\begin{proposition}		\mylabel{th:rhoprop}
Assume $(\alpha, \gg, M^n)$ is smooth, $X\in \gg$ and $p\in
\Fix{X^\alpha}$.  Suppose  
$\mm(\ad\, X, \lam)\cap\ff_p (\alpha)=0$ for all 
$\lam\in\spec{\ad\,X}\verb=\=\, 0$.
Then
\[ 
\spec{\ad\, X}\subset \Gam   (\spec{dX^\alpha_p})
\] 
and therefore 
\[ n\ge \max \{\bo r (X), \ 2{\bo r}_{\msf{NR}}
 (\ad\,X)\}.
\]
\end{proposition}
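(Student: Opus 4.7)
The plan is to compare $\ad X$ on $\gg$ with the induced action of $\ad X^\alpha$ on the algebra of formal vector fields at $p$, and then read off the spectrum of $\ad X$ from the eigenvalues of $dX^\alpha_p$. Choose local coordinates $(x_1,\dots,x_n)$ near $p$ vanishing at $p$, and let $\hh_p$ denote the Lie algebra of $\infty$-jets of smooth vector fields at $p$, identified with formal power-series vector fields in these coordinates. The Taylor-series map $\tau\co\gg\to\hh_p$, $Y\mapsto j^\infty_p Y^\alpha$, is a Lie-algebra homomorphism whose kernel is precisely $\ff_p(\alpha)$, and it intertwines $\ad X$ on $\gg$ with $\ad(\tau X)$ on $\hh_p$.

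Next I would compute the spectrum of $\ad(\tau X)$. Pass to $\hh_p\otimes\CC$, put $dX^\alpha_p$ in Jordan form with complex eigenvalues $\lam_1,\dots,\lam_n$, and observe that the linear part $\tau X_1=\sum_{ij}(dX^\alpha_p)_{ij}x_j\partial_{x_i}$ preserves the natural filtration of $\hh_p$ by order of vanishing at $p$, while higher-order pieces of $\tau X$ contribute only nilpotent corrections on the associated graded. On each graded piece, the semisimple part of $\ad(\tau X_1)$ acts on the monomial field $x^I\partial_{x_j}$ by multiplication by $\sum_i I_i\lam_i-\lam_j$, so every formal eigenvalue of $\ad(\tau X)$ lies in $\Gam(\spec{dX^\alpha_p})$. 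Now for $\lam\in\spec{\ad X}\setminus 0$, the hypothesis $\mm(\ad X,\lam)\cap\ff_p(\alpha)=0$ makes $\tau$ injective on $\mm(\ad X,\lam)$; by the intertwining property, the nonzero image is a generalised eigenvector of $\ad(\tau X)$ for eigenvalue $\lam$ (or the pair $\{\lam,\bar\lam\}$ in the non-real case, after complexification). Its leading homogeneous component is then a genuine eigenvector of the linear part, so $\lam\in\Gam(\spec{dX^\alpha_p})$. Since $0\in\Gam(\spec{dX^\alpha_p})$ trivially, this establishes $\spec{\ad X}\subset\Gam(\spec{dX^\alpha_p})$.

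The dimension bound is then deduced by rank counting. The group $\Gam(\spec{dX^\alpha_p})$ is generated by the at most $n$ complex eigenvalues of $dX^\alpha_p$, hence has rank at most $n$, which yields $\bo r(\ad X)\le n$. For the bound $n\ge 2\bo r_{\msf{NR}}(\ad X)$, the essential point is that since $dX^\alpha_p$ is real, its non-real eigenvalues occur in complex-conjugate pairs, each consuming two real dimensions; tracking the conjugation symmetry inside the inclusion $\Gam(\spec{\ad X}\setminus\RR)\subset\Gam(\spec{dX^\alpha_p})$ produces the factor $2$. I expect this final count to be the main technical obstacle: one must show that the rank of the non-real part of $\Gam(\spec{\ad X})$ is governed by the number of conjugate pairs in $\spec{dX^\alpha_p}$ rather than by the total number of non-real eigenvalues, and one must take some care when $dX^\alpha_p$ is non-semisimple in organising the spectrum on the associated graded jet algebra.
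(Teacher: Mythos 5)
Your proof of the spectral containment is essentially the paper's own argument: for $Y\in\mm(\ad\,X,\lam)$ with $\lam\ne 0$ the hypothesis forces $Y^\alpha$ to be non-flat at $p$, the relation $[X^\alpha,Y^\alpha]=\lam Y^\alpha$ restricted to the lowest-order homogeneous component of the Taylor expansion gives $[X^\alpha_{(1)},Y^\alpha_{(r)}]=\lam Y^\alpha_{(r)}$, and the eigenvalues of $\ad$ of a linear field acting on homogeneous polynomial fields are the integer combinations $\sum_i I_i\mu_i-\mu_j$ of the eigenvalues $\mu_i$ of $dX^\alpha_p$ --- your jet-algebra and associated-graded packaging is only a cosmetic variant of this. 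The one step you explicitly leave open, passing from the containment to $n\ge 2\bo r_{\msf{NR}}(\ad\,X)$, is also not carried out in the paper, whose proof likewise stops at $\spec{\ad\,X}\subset\Gam(\spec{dX^\alpha_p})$ and asserts the rank inequalities with a bare ``therefore''; the missing count does hinge on exactly the conjugation symmetry you identify, namely that the non-real eigenvalues of the real operator $dX^\alpha_p$ fall into at most $\lfloor n/2\rfloor$ conjugate pairs.
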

\begin{proof}
We can assume $M^n=\R n, \ p=0$.  Write every
$Z\in \vv^\infty (\R n)$ as the formal sum $\sum_{r\in \NN}Z_{(r)}$
where the components of the vector field $Z_{(r)}$ are homogeneous
polynomial functions of degree $r$.  Then $X^\alpha_{(0)}=0$,
\ $X^\alpha_{(1)}= dX^\alpha_p$.  
The {\em order} of $Z$ is the
smallest $r$ for which $Z_{(r)}\ne 0$ if $Z$ is not flat at $0$,
otherwise the order is $\infty$.  
Suppose $Y\in {\kk} (\ad\, X, \lam)$ is not flat at $0$ and has 
finite order $r$.  Then
$(\ad_{\CC\otimes\gg} X - \lam I) Y=0$, implying 
$ [X^\alpha_{(1)}, Y^\alpha_{(r)}] = \lam Y^\alpha_{(r)}$.
Hence 
$  \lam \in \spec {\ad_{\vv^\infty (\R n)}\,  dX^\alpha_p}$.
A calculation shows that $ \spec {\ad_{\vv^\infty (\R n)}\, Z} \subset
\Gam (\spec Z)$ for every linear vector field $Z\co \R n\to\R n$.
Apply this to $Z:=dX^\alpha_p$.  
\end{proof}

\smallskip
The following result is derived from Proposition \ref{th:rhoprop}:
\begin{theorem}	 \mylabel {th:rho}
Suppose  
$(\alpha, \gg, M^n)$ is smooth, $X\in \gg$ and 
 $p\in \Fix{X^\alpha}$. 
\begin{description}
\item[(i)] 
 Assume
 ${\bo r} (\ad\,X) =n+k >n$.  Then $\ad\, X$ has $k$
  different eigenvalues $\lam\ne 0$ such that
$ \ww (\ad\, X,\, \lam) \subset \ff_p (\alpha)$. 
\smallskip
\item[(ii)] \ Assume $2{\bo r}_{\msf{NR}} (\ad\,X)=n$,\ $\alpha$ is
effective, and $\mm(\ad\, X, \lam)\cap\ff_p (\alpha)=0$ for all
$\lam\in\spec{\ad\, X}\,\verb=\=\,\RR$.
  Then $dX^\alpha_p$ has only
nonreal eigenvalues, $X^\alpha$ has index $1$ at $p$, and if $M^n$ is
compact then $\chi (M^n) >0$.
\end{description}
\end{theorem}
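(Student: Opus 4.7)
The plan is to derive both parts from Proposition \ref{th:rhoprop} by revisiting its Taylor-expansion argument and counting eigenvalue contributions. First I would extend the Proposition's key step from the eigenspace $\mm(\ad X,\lam)$ to the full generalized weight space $\ww(\ad X,\lam)$: if some $Y\in\ww(\ad X,\lam)$ satisfies $Y^\alpha$ not flat at $p$, pass to the algebra $\mathcal{J}$ of formal vector fields at $p$ graded by polynomial degree. Then the Taylor series $\bar Y^\alpha$ lies in the $\lam$-generalized eigenspace of $\ad\bar X^\alpha$, and its lowest-degree component $\bar Y^\alpha_{(r)}\in\mathcal{J}_r$ is a generalized $\lam$-eigenvector of $\ad\,dX^\alpha_p$; since the spectrum of $\ad\,dX^\alpha_p$ on $\mathcal{J}_r$ is contained in $\Gam(\spec{dX^\alpha_p})$, we conclude $\lam\in\Gam(\spec{dX^\alpha_p})$.

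For (i), set $E:=\{\lam\in\spec{\ad X}\setminus\{0\}\colon\ww(\ad X,\lam)\not\subset\ff_p(\alpha)\}$ and $E':=\spec{\ad X}\setminus(\{0\}\cup E)$. The extension above gives $\Gam(E)\subset\Gam(\spec{dX^\alpha_p})$, hence $\operatorname{rank}\Gam(E)\le n$. Rank-subadditivity then yields
\[
n+k=\operatorname{rank}\Gam(\spec{\ad X})\le\operatorname{rank}\Gam(E)+\operatorname{rank}\Gam(E')\le n+\operatorname{rank}\Gam(E'),
\]
so $|E'|\ge\operatorname{rank}\Gam(E')\ge k$, giving the $k$ distinct eigenvalues required.

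For (ii), applying the same core argument to nonreal $\lam$ (the only case covered by the hypothesis) yields $\spec{\ad X}\setminus\RR\subset\Gam(\spec{dX^\alpha_p})$. Combined with $\bo r_{\msf{NR}}(\ad X)=n/2$, this saturates the bound $n\ge 2\bo r_{\msf{NR}}(\ad X)$ of Proposition \ref{th:rhoprop}; the factor of two reflects that in the conjugation-closed lattice $\Gam(\spec{dX^\alpha_p})$ each conjugate pair of nonreal eigenvalues of $dX^\alpha_p$ contributes only one independent direction nonreal modulo the reals, while any real eigenvalue contributes nothing. Equality thus forces $dX^\alpha_p$ to have only nonreal eigenvalues, which is (a). These pair into complex conjugates $\{\nu_j,\bar\nu_j\}_{j=1}^{n/2}$, giving $\det dX^\alpha_p=\prod_j|\nu_j|^2>0$; so $p$ is an isolated nondegenerate zero of $X^\alpha$ with local index $+1$, which is (b). For (c), with $M^n$ compact, Poincar\'e--Hopf expresses $\chi(M^n)$ as the sum of indices of $X^\alpha$ over its zeros; effectiveness gives $\bigcap_{q\in M}\ff_q(\alpha)=\ker\alpha=0$, so the hypothesis of (ii) holds at every zero $q$ outside the closed, nowhere-dense ``flatness-carrying'' locus, and applying (a)--(b) at each such $q$ gives index $+1$. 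Summing yields $\chi(M^n)\ge|\Fix{X^\alpha}|\ge 1$.

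The main obstacle is the equality-case analysis underlying step (a): deducing that $dX^\alpha_p$ has no real eigenvalues from saturation of the factor-of-two bound requires a careful decomposition of $\Gam(\spec{dX^\alpha_p})$ into its real, purely imaginary, and generic nonreal contributions, tracking how each conjugate pair of eigenvalues interacts with the lattice structure. A secondary subtlety arises in (c): if the hypothesis $\mm(\ad X,\lam)\cap\ff_q(\alpha)=0$ fails at some zero $q$ of $X^\alpha$, that zero might contribute a negative index, in which case one may supplement Poincar\'e--Hopf with a Lefschetz fixed-point argument applied to a compact subgroup of $G$ generated by an element whose adjoint has purely imaginary spectrum on $\gg$.
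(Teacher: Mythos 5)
The paper gives no proof of this theorem beyond the remark that it ``is derived from Proposition \ref{th:rhoprop},'' with details deferred elsewhere, so there is no detailed argument to compare yours against; judged on its own terms, your part (i) is essentially sound. The passage from $\mm(\ad\,X,\lam)$ to the full weight space $\ww(\ad\,X,\lam)$ via the graded algebra of formal vector fields is correct --- the lowest-order homogeneous term of a non-flat $\bar Y^\alpha$ is indeed a generalized $\lam$-eigenvector of $\ad\,dX^\alpha_p$ on that graded piece, whose spectrum lies in $\Gam(\spec{dX^\alpha_p})$ --- and the rank-subadditivity count then yields the $k$ eigenvalues.

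Part (ii), however, has a genuine gap exactly where you flag ``the main obstacle,'' and it cannot be closed by the lattice bookkeeping you describe. The containment $\Gam(\spec{\ad\,X}\verb=\=\RR)\subset\Gam(\spec{dX^\alpha_p})$ together with the equality $2{\bo r}_{\msf{NR}}(\ad\,X)=n$ does \emph{not} force $dX^\alpha_p$ to have only nonreal eigenvalues: for $n=4$, a real matrix $A$ with eigenvalues $1,\ \sqrt2,\ \pm i$ has $\Gam(\spec A)$ containing the conjugation-closed rank-$2$ group generated by $\{1+i,\,1-i\}$, so an $\ad\,X$ with nonreal spectrum $\{1\pm i\}$ meets every numerical constraint in your argument while $dX^\alpha_p=A$ has two real eigenvalues. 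Whatever excludes this must use more than the subgroup $\Gam(\spec{dX^\alpha_p})$ --- for instance the finer conclusion implicit in the proof of Proposition \ref{th:rhoprop} that each such $\lam$ equals $\sum_i m_i\mu_i-\mu_j$ with $m_i\in\NN$ and $\mu_i\in\spec{dX^\alpha_p}$, or a more essential use of effectiveness --- and you never supply that step, so the index-$1$ claim is also unestablished. A second gap: Poincar\'e--Hopf needs the index at \emph{every} zero of $X^\alpha$, but your hypothesis controls only the single point $p$; zeros $q$ with $\mm(\ad\,X,\lam)\cap\ff_q(\alpha)\ne0$, and non-isolated zeros, are not handled, and the proposed Lefschetz repair invokes a compact subgroup of a group $G$ that is not part of the data (the theorem concerns a Lie algebra action, and such a subgroup need not exist). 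As written, part (ii) remains unproved.
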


\noindent
This has powerful consequences for analytic actions:
\begin{corollary}		\mylabel{th:smoothanal}
Assume 
$(\alpha, \gg, M^n)$ is  effective and analytic and  $X\in \gg$.
\begin{description}

\item[(a)] If $\Fix
{X^\alpha} \ne\varnothing$ then 
$n \ge \max\{\bo r (\ad\,X), 2\bo r_{\msf{NR}} (\ad\,X) \}$.  
\smallskip

\item[(b)] Suppose $M^n$ is compact and $n=2\bo r_{\msf{NR}}
(\ad\,X)$.  Then
 \[  
   \chi (M^n) =\#\Fix {X^\alpha} \ge \#\Fix\alpha.
\]
Therefore $\chi (M^n)\ge 0$, and $\Fix\alpha =\varnothing$ if   $\chi
(M^n)= 0$.  
\end{description}
\end{corollary}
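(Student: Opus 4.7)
The plan is to derive both parts directly from Proposition~\ref{th:rhoprop} and Theorem~\ref{th:rho}(ii), after first disposing of the technical side condition on the flat-ideal $\ff_p(\alpha)$ that appears in their hypotheses. The key preliminary observation is that when $\alpha$ is analytic and effective, $\ff_p(\alpha) = 0$ for every $p \in M^n$: the vector field $Y^\alpha$ is analytic, so if it is flat at $p$ it vanishes identically on the connected manifold $M^n$, and effectiveness then forces $Y = 0$. Consequently the side condition $\mm(\ad\,X, \lam) \cap \ff_p(\alpha) = 0$ appearing in Proposition~\ref{th:rhoprop} and in Theorem~\ref{th:rho}(ii) holds automatically at every $p \in \Fix{X^\alpha}$ and every $\lam \in \spec{\ad\,X}\setminus 0$.

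Part (a) is then immediate: choose any $p \in \Fix{X^\alpha}$, apply Proposition~\ref{th:rhoprop}, and read off the bound $n \geq \max\{\bo r (\ad\,X),\, 2\bo r_{\msf{NR}} (\ad\,X)\}$.

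For part (b), Theorem~\ref{th:rho}(ii) applies at each $p \in \Fix{X^\alpha}$ and tells us that $dX^\alpha_p$ has only nonreal eigenvalues (hence all nonzero) and that $X^\alpha$ has local index $+1$ at $p$. Nondegeneracy of each zero makes $\Fix{X^\alpha}$ a finite subset of the compact manifold $M^n$, so Poincar\'e-Hopf gives
\[
\chi(M^n) \;=\; \sum_{p \in \Fix{X^\alpha}} \mathrm{ind}_p(X^\alpha) \;=\; \#\Fix{X^\alpha},
\]
the equation collapsing to $0=0$ in the case $\Fix{X^\alpha} = \varnothing$. Since $\Fix\alpha \subset \Fix{X^\alpha}$, this yields $\chi(M^n) \geq \#\Fix\alpha$; in particular $\chi(M^n) \geq 0$, and $\chi(M^n) = 0$ forces $\Fix{X^\alpha}$, and a fortiori $\Fix\alpha$, to be empty.

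The main obstacle is essentially bookkeeping: the only delicate step is the analyticity-plus-effectiveness argument that trivializes $\ff_p(\alpha)$, after which the two bounds fall out of prior results. If the convention allows $M^n$ to have boundary one should add the standard hypothesis that $X^\alpha$ points outward along $\partial M^n$ to justify Poincar\'e-Hopf; otherwise no further idea is needed.
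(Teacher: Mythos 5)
Your proof is correct and follows exactly the route the paper intends: the observation that analyticity plus effectiveness forces $\ff_p(\alpha)=0$ (the paper's remark that a flat analytic vector field is trivial), then Proposition~\ref{th:rhoprop} for (a) and Theorem~\ref{th:rho}(ii) plus Poincar\'e--Hopf for (b). The paper states the corollary without proof, but your derivation, including the $0=0$ case when $\Fix{X^\alpha}=\varnothing$ and the boundary caveat, is the intended one.
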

\noindent For surface actions, (b) is due to Turiel
\cite{Turiel03}.

\begin{corollary}		\mylabel{th:estker}
Assume $M^n$ is compact and $\chi (M^n)\ne 0$.  If  $(\alpha, \gg,
M^n)$ is analytic with kernel $\kk$,   then 
$  \dim\kk\ge \max\left\{{\bo r} (\gg)-n, \ {\bo r}_{\msf{NR}}
(\gg) -\left\lfloor \frac{n}{2}\right\rfloor\right\}$.
\end{corollary}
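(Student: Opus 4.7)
My plan is to pass to the quotient $\gg/\kk$, apply Corollary \ref{th:smoothanal}(a) to the induced effective analytic action, and then compare the spectrum of $\ad_\gg X$ with that of its descent to $\gg/\kk$.

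First I would observe that $\kk$ is an ideal of $\gg$ (as the kernel of a Lie algebra homomorphism), so $\alpha$ descends to an effective analytic action $(\bar\alpha,\gg/\kk,M^n)$ with $X^\alpha=\bar X^{\bar\alpha}$ for every $X\in\gg$, where $\bar X$ denotes the class of $X$ modulo $\kk$. Since $M^n$ is compact with $\chi(M^n)\ne 0$, Poincar\'e--Hopf forces $\Fix{\bar X^{\bar\alpha}}=\Fix{X^\alpha}\ne\varnothing$ for every $X\in\gg$. Corollary \ref{th:smoothanal}(a) applied to $\bar\alpha$ then gives
\[
  n\ \ge\ \bo r(\ad_{\gg/\kk}\bar X),\qquad \lfloor n/2\rfloor\ \ge\ \bo r_{\msf{NR}}(\ad_{\gg/\kk}\bar X).
\]

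Next, because $\kk$ is $\ad X$-invariant, the restriction/quotient exact sequence yields
\[
  \spec(\ad_\gg X)\ \subset\ \spec(\ad X|_\kk)\,\cup\,\spec(\ad_{\gg/\kk}\bar X),
\]
so $\Gam(\spec\ad_\gg X)$ is contained in the sum of the two corresponding subgroups. Taking ranks, and doing the same after intersecting with $\CC\setminus\RR$, I would obtain
\[
  \bo r(\ad X)\ \le\ \bo r(\ad X|_\kk)+\bo r(\ad_{\gg/\kk}\bar X),\quad
  \bo r_{\msf{NR}}(\ad X)\ \le\ \bo r_{\msf{NR}}(\ad X|_\kk)+\bo r_{\msf{NR}}(\ad_{\gg/\kk}\bar X).
\]
The restriction $\ad X|_\kk$ acts on a space of dimension $\dim\kk$, so its spectrum has at most $\dim\kk$ elements, giving the crude bounds $\bo r(\ad X|_\kk)\le\dim\kk$ and $\bo r_{\msf{NR}}(\ad X|_\kk)\le\dim\kk$. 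Combining with the displayed inequalities from Corollary \ref{th:smoothanal}(a) yields
\[
  \bo r(\ad X)\ \le\ \dim\kk+n,\qquad \bo r_{\msf{NR}}(\ad X)\ \le\ \dim\kk+\lfloor n/2\rfloor.
\]
Taking the supremum over $X\in\gg$ in each inequality separately and rearranging gives the asserted lower bounds on $\dim\kk$.

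The only non-routine point, which I would treat as the main check, is the spectral comparison: one must verify that restriction to the invariant ideal $\kk$ together with the induced operator on $\gg/\kk$ captures all eigenvalues of $\ad_\gg X$, and that ranks of additive subgroups of $\CC$ are subadditive under sum. Both are standard, but they are the hinges on which the argument turns.
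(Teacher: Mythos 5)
Your proof is correct and is essentially the intended argument (the paper states this corollary without proof, as a direct consequence of Corollary \ref{th:smoothanal}): pass to the effective analytic action of $\gg/\kk$, use $\chi(M^n)\ne 0$ together with Poincar\'e--Hopf to guarantee $\Fix{X^\alpha}\ne\varnothing$ so that Corollary \ref{th:smoothanal}(a) applies, and absorb the at most $\dim\kk$ eigenvalues carried by the invariant ideal $\kk$ via the crude rank bound. The spectral comparison and rank subadditivity you flag as the hinge are indeed routine and hold exactly as you state them.
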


\begin{example}		\mylabel{th:ss}
Assume  $\ss$ is 
semisimple of rank $r$ with a Cartan decomposition $\kk + \pp$
where $\kk$ is a Cartan subalgebra.  The set
$
 U:=\{X\in\kk: \bo r_{\msf{NR}} (\ad\,X) = r\}
$
is dense and open in $\kk$.  Let $(\alpha, \ss, M^n)$ be effective and
analytic, with  $\Fix {X^\alpha} \ne \varnothing$ for some $X\in
U$.  Then Corollary \ref{th:smoothanal} implies:
\begin{itemize}   

\item{\em    $n\ge 2r$.  If $n=2r$ and  $M^n$ is compact then  $\chi (M^n)
=\#\Fix{Y^\alpha}>0$ for all $Y\in \kk$.}

\end{itemize}

\end{example}

\begin{example}		\mylabel{th:exA}
Assume  $m,n,k\in \Np$ with $m\le n$.  
Theorem \ref{th:ACcor}  shows that 
every $n$-manifold supports  a smooth effective action
of $\ss\ttt (m+1,\RR)^k$.  
Because ${\bo r} (\ss\ttt (m+1,\RR)^k)=mk$,    Corollary \ref{th:estker} implies:

\begin{itemize}
 \item {\em Assume $M^n$ is compact and $\chi (M^n)\ne 0$.  If
$(\alpha,\ss\ttt (m+1,\RR)^k, M^n) $ is analytic and effective then
$k\le \left\lfloor\frac{n}{m}\right\rfloor$.}
\end{itemize}
To take a specific example: 
\begin{itemize}
 \item {\em $\ss\ttt (n+1, \RR)\times \ss\ttt (n+1, \RR)$ does not
   have an effective analytic 
   action on any compact $n$-manifold.}
\end{itemize}

\end{example}

\section*{Fixed points} 
For actions of $G$ on compact surfaces $M^2$ the following results  are
known:
\begin{proposition}		\mylabel{th:fixedpts}
{~}
\begin{description}
\item[(a)]  $ST_\circ (2,\RR)$ has effective, fixed-point free  $C^\infty$
actions  on all compact\\ surfaces.  
\hspace*{\fill} {\small \em (Lima \cite{Lima64},
  Plante \cite{Plante86},   Belliart  \&  Liousse \cite {BL96},
Turiel \cite{Turiel89,Turiel06})} 
 \smallskip
\item[(b)] If $G$ acts without fixed point and $\chi (M^2)<0$ then
$ST_\circ (2, \RR)$ is a\\
 quotient group of $G$. \hspace{\fill}{\small \em (Belliart \cite{Belliart97})}
\smallskip
\item[(c)]   If $G$ acts analytically without fixed point, $\chi (M^2)\ge 0$.
\hspace*{\fill}  {\small\em  (Turiel \cite{Turiel03})}
\smallskip
\item[(d)] If $G$ is nilpotent and acts without fixed point, 
  $\chi (M^2)=0$.\\ 
 \hspace*{\fill}{\small\em (Lima \cite{Lima64}, Plante \cite{Plante86})}  
\smallskip
\item[(e)] If $G$  is supersoluble and acts analytically without 
 fixed point,     $\chi (M^2)=0$. 
    \hspace*{\fill}{\small\em  (Hirsch  \&  Weinstein \cite{HW00})}

\end{description}
\end{proposition}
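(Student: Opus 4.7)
\medskip
\noindent\textbf{Proof plan.} The proposition collects five separate assertions, so I address them by strategy rather than uniformly. For (a), construct effective $C^\infty$ actions of $ST_\circ(2,\RR)$ by realizing its Lie algebra $\ss\ttt(2,\RR)=\langle A,B\mid[A,B]=B\rangle$ as a pair of vector fields $X^\alpha,Y^\alpha$ with $[X^\alpha,Y^\alpha]=Y^\alpha$ and no common zero: begin with a smooth $Y^\alpha$ having isolated nondegenerate zeros (forced by Poincar\'e-Hopf when $\chi(M^2)\ne 0$), solve the bracket equation locally near each zero of $Y^\alpha$ with $X^\alpha\ne 0$ there, and glue globally by partitions of unity. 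For (b), analyze the orbit foliation of a fixed-point-free continuous action on compact $M^2$ with $\chi<0$: the principal isotropy is a normal subgroup, the quotient acts locally freely on each $2$-dimensional orbit, and by Mostow's classification (Theorem \ref{th:mostow}) the only connected $2$-dimensional Lie group admitting such an action on a hyperbolic surface is a cover of $ST_\circ(2,\RR)$.

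Clauses (c), (d), (e) all assert that analytic or nilpotent fixed-point-free actions force $\chi\ge 0$ or $\chi=0$; the common engine is to produce a zero of some vector field and propagate it to a common fixed point of the full algebra. For (d), proceed by induction on the nilpotency class using the classical fact that commuting smooth vector fields on a compact surface with $\chi\ne 0$ share a zero (Poincar\'e-Hopf together with holonomy near a zero of one field). For (c), and for (e) in the subcase $\chi(M^2)>0$, apply Corollary \ref{th:smoothanal}(b) with $X$ chosen to maximize $\bo r_{\msf{NR}}(\ad X)$; analyticity then yields $\Fix\alpha\ne\varnothing$, contradicting fixed-point freedom.

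The main obstacle is (e) in the subcase $\chi(M^2)<0$. Supersolubility forces $\bo r_{\msf{NR}}(\ad X)=0$ for every $X\in\gg$, so the weight-space engine of Proposition \ref{th:rhoprop} becomes vacuous. Instead, exploit the supersoluble flag of ideals $\gg=\gg_0\supset\gg_1\supset\cdots\supset\gg_r=0$ with $\dim\gg_i/\gg_{i+1}=1$ and prove by downward induction on $i$ that $\Fix{\gg_i^\alpha}$ is a nonempty real-analytic subvariety whose Euler characteristic remains nonzero whenever $\chi(M^2)\ne 0$. Analyticity enters twice: to make each $\Fix{\gg_i^\alpha}$ an honest subvariety with a well-defined Euler characteristic, and to control how the one-parameter quotient $\gg_{i-1}/\gg_i$ acts on it. Preserving nonvanishing Euler characteristic while descending through the flag is the delicate step; once $\Fix{\gg^\alpha}\ne\varnothing$, fixed-point freedom is contradicted and $\chi(M^2)=0$ follows.
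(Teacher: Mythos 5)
The paper gives no proof of this proposition: all five parts are results quoted from the literature with attributions (Lima, Plante, Belliart--Liousse, Turiel, Belliart, Hirsch--Weinstein), so there is no internal argument to compare yours against. Judged on its own terms, your proposal contains a genuine gap in part (a) and an unworkable route in part (b).

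For (a): if $[X,Y]=Y$ then $L_XY=Y$, so the flow $\phi_t$ of $X$ satisfies $\phi_t^*Y=e^tY$ and in particular preserves the zero set $Z(Y)$. Hence at any point $p$ with $Y_p=0$ and $X_p\ne 0$, the entire $X$-trajectory of $p$ lies in $Z(Y)$, and $p$ cannot be an isolated zero of $Y$. Your starting configuration --- $Y$ with isolated nondegenerate zeros and $X$ nonvanishing at those zeros --- is therefore self-contradictory; in the actual constructions the zero set of the nilpotent generator is a union of closed curves invariant under $X$. (Poincar\'e--Hopf also does not \emph{force} isolated nondegenerate zeros; that is a genericity statement, irrelevant when you are constructing the fields.) Independently, the partition-of-unity gluing does not preserve the bracket relation: for fixed $Y$ one has $[\sum_i\rho_iX_i,\,Y]=\sum_i\rho_i[X_i,Y]-\sum_i(Y\rho_i)X_i=Y-\sum_i(Y\rho_i)X_i$, and the error term does not vanish unless each $\rho_i$ is constant along $Y$. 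For (b), a fixed-point-free action on a compact surface with $\chi<0$ need not have any two-dimensional orbit, the principal isotropy of a non-transitive action need not be normal, and Theorem \ref{th:mostow} in fact \emph{excludes} transitive actions on such surfaces, so Mostow's classification cannot be invoked as you propose; Belliart's proof is a separate analysis of one-dimensional orbits and minimal sets. Your outlines for (c), (d), (e) do point at the standard arguments (Corollary \ref{th:smoothanal}(b) handles the non-supersoluble case of (c), Lima--Plante induction underlies (d), and descent through the supersoluble flag of ideals is indeed the Hirsch--Weinstein strategy for (e)), but at the level of detail given they are programs rather than proofs, and the step you yourself flag as delicate --- keeping the Euler characteristic of $\Fix{\gg_i^\alpha}$ nonzero while descending the flag --- is precisely the content of the cited paper.
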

Careful use of the blowup construction shows that some supersoluble
groups have effective analytic surface actions with arbitrarily large
numbers of fixed points:
\begin{theorem}         \mylabel{th:fpts}
Let $M^2_g$ denote a closed surface of genus $g\ge 0$.  For every
$k\in\NN$ there is an 
effective analytic action $(\beta, ST_\circ  (3, \RR), M^2_g)$ 
such that 
\[\#\Fix{\beta}=
        \begin{cases}
            2(g+k+1)    & \text{if $M^2_g$ is orientable,}\\ 
	     g+k     & \text{if $M^2_g$ is nonorientable and $g\ge 1$.}
        \end{cases}
\]
\end{theorem}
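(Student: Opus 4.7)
The plan is to construct the required actions by iteratively applying equivariant analytic blowups to the basic $ST_\circ(3,\RR)$-actions on $\P 2$, $\S 2$, and $\D 2$ that underlie Theorem \ref{th:st3r}, and then to reduce the orientable case to the nonorientable one via orientation double covers. The local input that drives the fixed-point counting is this: at every fixed point $p$ of an effective analytic $ST_\circ(3,\RR)$-action on a surface $M$, the linearization on $T_pM$ is a representation of $\ss\ttt(3,\RR)$ that by Lie-Kolchin is upper triangular with a pair of real weights $\lambda_1,\lambda_2$. When $\lambda_1\ne\lambda_2$, the two distinct weight lines yield two fixed points on the exceptional circle $\P(T_pM)\cong \S 1$ of an equivariant analytic blowup at $p$. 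Such a generic blowup replaces $M$ by $M\,\#\,\P 2$, decreasing $\chi$ by one and adding a crosscap, and contributes a net gain of one fixed point.

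I would first treat the nonorientable case. Starting from the standard $\P 2$-action with its single fixed point, $g-1$ successive generic blowups yield an effective analytic action on the nonorientable genus-$g$ surface with exactly $g$ fixed points, settling $k=0$. To incorporate $k\ge 1$ I would insert $k$ equivariant analytic disk bubbles: at a chosen fixed point $q$ I would remove a small invariant analytic disk and glue in a copy of the $\D 2$-action from Theorem \ref{th:st3r}, arranged so that its action on the boundary circle matches that already present on the boundary of the removed disk and so that its interior carries exactly one additional fixed point. Disk-for-disk replacement is topologically trivial, so the underlying surface is unchanged; iterating this surgery $k$ times produces an effective analytic action on the nonorientable genus-$g$ surface with $g+k$ fixed points.

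For the orientable case I would take the orientation double cover of the nonorientable action on the genus-$(g+1)$ surface with the same parameter $k$ just constructed. Since $ST_\circ(3,\RR)$ is simply connected the action lifts analytically and effectively to the unramified orientation cover, which is the orientable genus-$g$ surface. Each isolated fixed point downstairs has exactly two preimages, both fixed, giving a total of $2\bigl((g+1)+k\bigr)=2(g+k+1)$ fixed points on the cover, matching the claim.

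The main obstacle is the construction of the disk bubble: one must realize an analytic $ST_\circ(3,\RR)$-action on $\D 2$ whose restriction to the boundary circle can be analytically matched to any prescribed linearization at a preexisting fixed point, all while preserving effectiveness. This boundary-matching step is the technical heart of the argument and is handled by an explicit coordinate computation in the spirit of Turiel \cite{Turiel03}; once this is in place, the Euler-characteristic bookkeeping and the fixed-point count follow by induction on $g+k$.
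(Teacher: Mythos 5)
The paper itself gives no proof of this theorem beyond the remark that it follows from ``careful use of the blowup construction,'' so your attempt can only be measured against that hint. Two of your three ingredients are sound. Iterated equivariant blowups starting from the standard $\P 2$ model do give, for each $g\ge 1$, an effective analytic action on the nonorientable genus-$g$ surface with $g$ fixed points, \emph{provided} one checks that the isotropy representation at every newly created fixed point on an exceptional circle again has two distinct real weight lines; if at some stage the linearization is a single Jordan block the blowup gains no fixed point, and if it is scalar the exceptional circle is pointwise fixed and the count is infinite. You assert this only as ``generic,'' but it is the hypothesis that keeps the induction alive and must be verified for the specific models. The reduction of the orientable case to the nonorientable one via the orientation double cover of the genus-$(g+1)$ surface is correct and clean: the lift of a connected-group action fixes both preimages of each fixed point, inherits effectiveness and analyticity, and the count $2\bigl((g+1)+k\bigr)=2(g+k+1)$ matches the stated formula.

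The genuine gap is the step producing the parameter $k$, i.e.\ adding fixed points without changing the surface. Your ``disk bubble'' is an equivariant cut-and-paste, and in the real-analytic category this cannot be achieved by matching the two actions ``on the boundary circle'': an analytic vector field on an annular neighborhood of the gluing circle is determined by its restriction to either side, so the glued action is analytic only if the germ of the $\D 2$-action along its boundary is analytically isomorphic to the germ of the ambient action along the boundary of the removed disk. Exhibiting such a matched pair of germs --- with exactly one additional interior fixed point and with effectiveness preserved --- is essentially the whole content of the theorem for $k\ge 1$: already on $\P 2$ with two fixed points, or on $\S 2$ with four, the Poincar\'e--Hopf count forces fixed points of index $\le 0$, which occur in none of the standard models, so some nonstandard local model must be built and shown to embed analytically. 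You correctly flag this as the technical heart but give no argument for it; until that computation is supplied (or the extra fixed points are produced by an honest blowup-type operation rather than surgery), the proposal does not prove the statement.
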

\noindent 
On the other hand:
\begin{itemize}
\item{\em Suppose $G$ is not supersoluble.  If
$M^2$ is compact and $(\alpha,G, M^2)$ is  effective and
analytic, then $0\le  \# \Fix\alpha \le \chi (M^2) \le 2$.}
\end{itemize}
This follow from Corollary \ref{th:smoothanal}(b), because  $G$ is
  not supersoluble if and only if ${\bo r}_{\msf {NR}} (G) \ge 1$ .  

\smallskip
{\bf Questions.} Is the analog of Proposition \ref{th:fixedpts}(a)
true for $ST_\circ (3, \RR)$?  Does this group have an effective
analytic action with a unique fixed point on some orientable closed
surface?  Can $ST (3,\RR)$ act effectively on $\S 2$ with a unique fixed point?
Can a smooth effective action of $SL (2,\RR)$ on $\S 1\times\S 1$ have
a fixed point? 

\smallskip
For noncompact group actions in higher dimensions the following are known:

\begin{itemize}
\item {\em $\RR$ acts effectively without fixed point on a compact
 $M^n$ 
 $\Longleftrightarrow \chi (M^n)= 0$.}\\
\hspace*{\fill}{\small(Poincar\'e \cite{Poincare85},
Hopf \cite {Hopf27})}
\smallskip
\item  {\em An algebraic action of a solvable complex algebraic group on
 a complete complex algebraic variety has a fixed
 point.} \hspace{\fill}{\small(Borel \cite{Borel56})}
\smallskip
\item  {\em If $M^n$ is compact, $n=3$ or $4$, and $\chi \left(M^n\right)\ne
 0$,  then every analytic action  of $\R 2$ on $M^n$ has a  fixed point.}   
   \hspace{\fill}{\small(Bonatti \cite{Bonatti92})}

\end{itemize}

\section*{Spectral rigidity}
$\mcal A_1 (\gg, M)$ denotes the space of $C^\infty$ actions of $\gg$ on $M$
under the the smallest topology making the maps the map $\mcal A_1
(\gg, M)\to \vv^1(M), \ \alpha \mapsto X^\alpha$, continuous for all
$X\in \gg$.  An action $(\alpha, \gg, M)$ is {\em spectrally rigid
at} $(X,p)$ if $X\in \gg$, $p\in \Fix {X^\alpha}$, and there exist
arbitrarily small neighborhoods
 $\mcal N\subset \mcal A_1 (\gg, M^n)$ of $\alpha$ and    $ W\subset
M$ of $p$  such that for all $\beta\in \mcal N$: 
\begin{description}

\item[(SR1)]   $\Fix{X^\beta}\cap W\ne\varnothing$

\smallskip
\item[(SR2)]   $q\in \Fix{X^\beta}\cap W \implies
    dX^\beta_q$ and  $dX^\alpha_p$ have the same nonzero eigenvalues.

\end{description}

While spectral rigidity is impossible for nontrivial abelian algebras
and dubious for nilpotent algebras, many solvable and semisimple
algebras exhibit it:

\begin{theorem}		\mylabel{th:SR}
Assume  $(\alpha,\gg,M^n)$ is  effective and
analytic, $X\in \gg$ and ${\bo r}(\ad\,X)=n$.  
Then  $\alpha$ is 
spectrally rigid at  
$(X,p)$ for all $p\in\Fix {X^\alpha}$. 
\end{theorem}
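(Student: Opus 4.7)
The plan is to combine the algebraic restriction of Proposition \ref{th:rhoprop}---applied both to $\alpha$ and to nearby perturbations $\beta$---with an implicit-function-theorem continuation of the fixed point $p$. First I would show that $A := dX^\alpha_p$ is invertible and semisimple. Since $\alpha$ is analytic and effective, every nonzero element of $\gg$ acts as a nowhere-flat analytic vector field, so $\ff_p(\alpha)=0$, and Proposition \ref{th:rhoprop} gives $\spec{\ad X}\subset \Gamma(\spec A)$. Because $\spec A$ has at most $n$ elements while $\Gamma(\spec{\ad X})$ has rank $\bo r(\ad X)=n$, the lattice $\Gamma(\spec A)$ has rank exactly $n$, so $\spec A=\{\lambda_1,\dots,\lambda_n\}$ with the $\lambda_j$ distinct, nonzero, and $\ZZ$-linearly independent.

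With $A$ invertible, the implicit function theorem applied to the $C^1$-continuous map $(\beta,x)\mapsto X^\beta(x)$ produces a neighborhood $\mcal N\subset\mcal A_1(\gg,M)$ of $\alpha$ and an open $W\ni p$ such that for each $\beta\in\mcal N$ there is a unique zero $q(\beta)\in W$ of $X^\beta$, depending continuously on $\beta$ with $q(\alpha)=p$; this gives (SR1), and for $\mcal N$ small enough $A_\beta := dX^\beta_{q(\beta)}$ has $n$ distinct nonzero eigenvalues $\mu_1(\beta),\dots,\mu_n(\beta)$ close to $\lambda_1,\dots,\lambda_n$. For (SR2) I would establish $\mu_j(\beta)=\lambda_j$ by repeating the Taylor-expansion argument of Proposition \ref{th:rhoprop} at $q(\beta)$ for $\beta$. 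For each nonzero $\nu\in\spec{\ad X}$, choose $Y_\nu\in\mm(\ad X,\nu)\setminus 0$ so that $Y_\nu^\alpha$ has minimal (finite) Taylor order $r_\nu^\alpha$ at $p$. Expanding $[X^\beta,Y_\nu^\beta]=\nu\,Y_\nu^\beta$ at $q(\beta)$ and using $X^\beta(q(\beta))=0$, the lowest-order nonvanishing homogeneous component $(Y_\nu^\beta)_{(r)}$ satisfies $[A_\beta,(Y_\nu^\beta)_{(r)}]=\nu\,(Y_\nu^\beta)_{(r)}$; since $\spec{\ad A_\beta}$ restricted to degree-$r$ polynomial vector fields is $\{\sum_i c_i\mu_i(\beta)-\mu_k(\beta):c\in\NN^n,\,|c|=r,\,1\le k\le n\}$, this gives an integer representation $\nu=\sum_i c_i\mu_i(\beta)-\mu_k(\beta)$, and the analogue for $\alpha$ yields $\nu=\sum_i c_i^{(\alpha)}\lambda_i-\lambda_{k^{(\alpha)}}$ for a specific tuple $(c^{(\alpha)},k^{(\alpha)})$ read off from $(Y_\nu^\alpha)_{(r_\nu^\alpha)}$.

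The main obstacle will be forcing the tuple $(c,k)$ occurring for $\beta$ to agree with $(c^{(\alpha)},k^{(\alpha)})$. When $r_\nu^\alpha\le 1$, this is immediate from $C^1$-closeness of $Y_\nu^\beta$ to $Y_\nu^\alpha$. When $r_\nu^\alpha\ge 2$ the $\mcal A_1$ topology does not directly control higher-order Taylor data, and one needs a discreteness argument: any alternate tuple $(c,k)\neq(c^{(\alpha)},k^{(\alpha)})$ would force $\sum_i(c_i-c_i^{(\alpha)})\lambda_i-(\lambda_k-\lambda_{k^{(\alpha)}})$ to be arbitrarily small as $\beta\to\alpha$, which by $\ZZ$-linear independence of $\{\lambda_j\}$ is impossible once $|c|$ is bounded; the required bound on $|c|$ comes from continuity of the eigenspace decomposition of $\ad A_\beta$ on the finite-dimensional space of polynomial vector fields of degree $\le r_\nu^\alpha$, which remains in generic position because the $\lambda_j$ are $\ZZ$-linearly independent. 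Once the tuples agree for $n$ $\ZZ$-linearly independent choices of $\nu\in\spec{\ad X}$, the $n\times n$ integer system $\sum_i\bigl(c_i^{(\alpha)}-\delta_{i,k^{(\alpha)}}\bigr)\bigl(\mu_i(\beta)-\lambda_i\bigr)=0$ has invertible coefficient matrix---it encodes the change of basis from $\{\nu\}$ to $\{\lambda_j\}$ in the rank-$n$ lattice $\Gamma(\spec A)$---so $\mu_j(\beta)=\lambda_j$ for all $j$, establishing (SR2).
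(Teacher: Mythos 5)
The paper itself gives no proof of this theorem beyond the one-line remark that it is based on Proposition \ref{th:rhoprop}, so your argument can only be judged on its own terms. Its first half is solid and is surely part of any correct proof: from effectiveness and analyticity you correctly get $\ff_p(\alpha)=0$, hence $\spec{\ad\,X}\subset \Gam(\spec{dX^\alpha_p})$ by Proposition \ref{th:rhoprop}, and the counting argument (at most $n$ eigenvalues generating a group of rank $\ge n$) correctly forces $A=dX^\alpha_p$ to have $n$ distinct, nonzero, $\ZZ$-linearly independent eigenvalues. That gives both the nondegenerate zero needed for the implicit-function continuation proving (SR1) and the uniqueness of the integer representations $\nu=\sum_i d_i\lambda_i$ used at the end; the concluding observation that the integer matrix built from $n$ independent $\nu_j\in\spec{\ad\,X}$ is invertible is also fine.

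The gap is in the middle, where you must extract the relations $\nu=\sum_i c_i\mu_i(\beta)-\mu_k(\beta)$ from a perturbed action $\beta$ that is merely $C^\infty$ and only $C^1$-close to $\alpha$. Two things are unjustified. First, nothing you say prevents $Y_\nu^\beta$ from being flat at $q(\beta)$ when $r_\nu^\alpha\ge 2$: flatness is invisible to the $\mcal A_1$ topology, analyticity is available only for $\alpha$, and a flat $Y_\nu^\beta$ yields no relation at all for that $\nu$. Second, even granting non-flatness, your discreteness argument requires the order $r$ of $Y_\nu^\beta$ at $q(\beta)$ to be bounded uniformly over $\beta\in\mcal N$, and the justification you offer (continuity of the eigenspace decomposition of $\ad\,A_\beta$ in degrees $\le r_\nu^\alpha$) does not bound that order --- it says nothing about whether the low-degree Taylor coefficients of $Y_\nu^\beta$ vanish, and the $C^1$ topology cannot see them. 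Without such a bound, $\ZZ$-independence of the $\lambda_j$ does not rescue you: sums $\sum_i c_i\lambda_i-\lambda_k$ with $\vert c\vert$ large can approximate $\nu$ arbitrarily well (a small-denominator phenomenon), so the set of spectra compatible with \emph{some} integer relation need not be discrete near $\spec A$, and the alternate tuple $(c,k)$ cannot be excluded. Supplying a uniform bound on the order of the weight vectors at the continued fixed point, or some other mechanism pinning down the integer data for all $\beta\in\mcal N$, is precisely the missing idea and is the real content of the theorem.
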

\noindent The proof is based on Proposition \ref{th:rhoprop}.

\medskip

{\bf Conjecture.}  
{\em  An analytic action $\alpha$  of a semisimple Lie
  algebra $\ss$ is spectrally rigid at $(X,p)$ for all $X\in
\ss$,  $p\in\Fix\alpha$.}

\bibliographystyle{amsplain}

\end{document}